\documentclass[11pt]{article}
\usepackage{mylatexsetting}
\newcommand{\Nzero}{\mathbb N_0}
\newcommand{\HypF}[2]{{}_{#1}F_{#2}}
\newcommand{\E}{\mathbb E}
\newcommand{\Var}{\operatorname{Var}}
\newcommand{\Cov}{\operatorname{Cov}}

\providecommand{\R}{\mathbb R}
\newcommand{\Pone}{P_{\theta_1}}
\newcommand{\Ptwo}{P_{\theta_2}}
\theoremstyle{definition}
\newtheorem{assumption}[thm]{Assumption}

\def\mytitle{A Probabilistnoic Sign Rule for Quotients of Positive Series and Integral Transforms}

\def\myabstract{%
This paper develops a probabilistic sign rule for quotients of functions represented by positive series or integrals. For a function in this class, normalising the summand function in the series case or the integrand function in the integral case induces a probability law under which parameter log-derivatives of the function are expressed as moments of kernels, the log-derivatives of the same summand or integrand function with respect to the same parameters. The resulting moment identities reduce quotient monotonicity, log-supermodularity, and log-convexity to sign criteria based on kernel monotonicity, stochastic ordering of the induced laws, and covariance or variance identities. The criteria are applied to generalised hypergeometric, Stieltjes-transform, and Prabhakar quotients, yielding new Tur\'an inequalities, two-sided Stieltjes bounds, and a local failure threshold for a monotonicity conjecture for the zero-balanced Gauss function.
}

\title{\mytitle}
\author{Zakaria Derbazi\footnote{Correspondence address: z.derbazi@qmul.ac.uk}\\{\it\small Queen Mary University of London}}
\date{}

\begin{document}

\maketitle

\begin{abstract}
\myabstract
\end{abstract}

\medskip\noindent \textbf{MSC 2020:} Primary 33C20, 26D07; Secondary 60E15

\medskip\noindent \textbf{Keywords:} \begin{minipage}[t]{0.84\textwidth}
special functions, quotient monotonicity, stochastic ordering, log-convexity, Tur{\'a}n-type inequalities%
\end{minipage}

\section{Introduction}
\label{sec:introduction}


The question motivating this paper arose in applied probability, where the monotone case of optimal stopping in two last-success problems reduces to determining the sign of a parameter log-derivative of a Gaussian hypergeometric function in one instance~\cite{GD2} and of a Kummer function in another~\cite{D2}.  Direct manipulation of these special functions is delicate, and the two cases called for distinct arguments. This highlights the need for a unified approach  that applies broadly across families.

To this end, we identify a common mechanism for functions represented by positive series or integrals. To fix ideas, take the Kummer function
\[
        M(a;c;x)=\sum_{n\ge0}\frac{(a)_n}{(c)_n}\frac{x^n}{n!},
        \qquad a,c,x>0.
\]
Fix a number \(\delta > 0\) and consider the monotonicity of the maps
\[
        x\mapsto\frac{M(a;c + \delta;x)}{M(a;c;x)},
        \qquad
        a\mapsto\frac{M(a;c + \delta;x)}{M(a;c;x)},
        \qquad
        c\mapsto\frac{M(a;c + \delta;x)}{M(a;c;x)}.
\]
Each quotient is positive, so its monotonicity is equivalently determined by the signs of the log-derivatives
\begin{equation}
\label{eq:intro-three-logderivs}
        \partial_x\log\frac{M(a;c+\delta;x)}{M(a;c;x)},
        \qquad
        \partial_a\log\frac{M(a;c+\delta;x)}{M(a;c;x)},
        \qquad
        \partial_c\log\frac{M(a;c+\delta;x)}{M(a;c;x)}.
\end{equation}
For each fixed \(a,c,x\), the terms \((a)_n x^n/((c)_n n!)\), normalised by their sum \(M(a;c;x)\), define a probability law \(P_{2}\) on the nonnegative integers, the law of the denominator.  Replacing \(c\) by  \(c+\delta\) gives a second law \(P_{1}\), the law of the numerator. 

Each argument or parameter of \(M\) has an associated {\it kernel}, the log-derivative of the \(n\)th term with respect to that variable. For the first two derivatives in~\eqref{eq:intro-three-logderivs}, the differentiated variable is not the shifted parameter \(c\), so the derivative contrasts the mean of the same kernel under \(P_{1}\) and \(P_{2}\). A stochastic ordering of the two laws then yields the sign of this difference. The third derivative is a one-parameter curvature question, settled when the kernel with respect to \(c\) is nondecreasing in \(c\).

These are instances of a kernel-based framework that we formalise as follows. Let $\Phi(\lambda,\theta)$ represent a series or integral with positive terms, with {\it monotonicity} variable $\lambda$ and {\it shifted} parameter $\theta$. 
Denote the $\lambda$-kernel and $\theta$-kernel  by  $K_\lambda$ and $L_\theta$, and let $\ell$ be the likelihood ratio of the two laws $P_{\theta_1}$ and $P_{\theta_2}$, induced by two distinct values $\theta_1,\theta_2$ of the shifted parameter.  Write $\varepsilon_K, \varepsilon_L, \varepsilon_\ell$ for the monotonicity signs of $K_\lambda, L_\theta$ and $\ell$, respectively, where this sign is $1$ when the function is nondecreasing, $-1$ when nonincreasing, and $0$ otherwise.

The parameter log-derivatives of $\Phi$ are moments of these kernels under the induced law(s), and each moment identity has a corresponding sign interpretation. The expectation of $K_\lambda$ governs the monotonicity in $\lambda$ of the quotient $\Phi(\lambda,\theta_1)/\Phi(\lambda,\theta_2)$, determined by a stochastic ordering of $P_{\theta_1}$ and $P_{\theta_2}$ when $K_\lambda$ is monotone. The covariance of the two kernels $K_\lambda, L_\theta$ under one law is the mixed second derivative \(\partial_\theta\partial_\lambda\log\Phi\), and therefore controls log-supermodularity and its associated determinantal inequality. These are the one-kernel and two-kernel forms of the {\it Sign Rule}
\[
        \varepsilon_K\,\varepsilon_\ell\, \partial_\lambda \log\frac{\Phi(\lambda,\theta_1)}{\Phi(\lambda,\theta_2)} \ge  0,
        \quad
        \varepsilon_K\,\varepsilon_L\, \partial_\theta\partial_\lambda\log\Phi(\lambda,\theta)\ge 0,
\]
for admissible parameters $\theta_1, \theta_2$ and $\lambda$. These inequalities are the sign criteria obtained from kernel monotonicity and, in the one-kernel case, stochastic ordering of the induced laws. A one-parameter curvature version, developed in the sequel, provides sufficient conditions for determining log-convexity and deriving the corresponding Tur\'an-type inequalities.

The literature on quotient monotonicity and related Tur\'an-type inequalities for functional series and integral transforms appears under two main strands.  The first is analytical and comprises the lemma of Biernacki and Krzyż~\cite{BiernackiKrzyz1955}, with extensions to functional series and integral transforms~\cite{YangChuWang2015} and to Mittag-Leffler quotients~\cite{MaoTian2024PAMS}.  The second strand, probabilistic and less developed, uses moments of an auxiliary distribution to obtain complete-monotonicity and Tur\'an-type inequalities~\cite{TomovskiMehrez2017}.

The present work belongs to the second type, but rather than computing moments directly, we decide the sign of a parameter derivative by a stochastic ordering of the induced laws, read from the kernels.  Stochastic order is thus the input that proves monotonicity, rather than an output generated from it.  This stochastic order argument recovers the Biernacki--Krzyż lemma and its extensions, while the variance and covariance criteria recover the moment computations of the second strand. For a detailed coverage of the kernel method and stochastic order for probability laws generated by special functions see~\cite{Derbazi2026kernel}.

The paper is organised as follows. Section~\ref{sec:framework} develops the framework by deriving the moment identities behind the sign criteria: the expectation of one kernel for quotient monotonicity, the covariance of two kernels for mixed derivatives and associated determinantal inequalities, and the one-parameter curvature identity for log-convexity. Section~\ref{sec:sample-classical} applies these to the generalised hypergeometric function, the generalised Stieltjes transform, and the Prabhakar function, and then to three named problems: a remainder Tur\'an problem of Mehrez and Sitnik for the Prabhakar function, with a sharp uniform range in the case \(q=1\), the local failure threshold in the zero-balanced Gauss monotonicity of Baricz, and a positive-transform criterion for shifted-hypergeometric Tur\'an inequalities of Kalmykov--Karp type. Section~\ref{sec:conclusion-final} concludes by highlighting the limits of this method and future extensions.

\section{The probabilistic framework}
\label{sec:framework}


Throughout the paper,  all random variables are defined on a complete probability space, fixed once and for all. The notation \(X\sim P\) means that $P$ is the probability law of  $X$.  Expectations, variances and covariances are denoted, respectively, by the symbols \(\E,\, \Var,\, \Cov\), without subscripts, and are taken under the law of the random variable. The symbol \(\sgn\) designates the sign function, taking value \(1\) when its argument is positive, \(-1\) when negative, and \(0\) otherwise. Finally, given a real function $f$, its monotonicity direction is denoted $\varepsilon_f$, where $\varepsilon_f=+1$ when $f$ is nondecreasing, $\varepsilon_f=-1$ when $f$ is nonincreasing, and $\varepsilon_f=0$ otherwise.

\subsection{Preliminaries}

\label{sub.preliminaries}
Let \(\Lambda\subseteq\mathbb{R}\) be an interval, \(\Theta\subseteq\mathbb{R}\) an ordered parameter set, and \(J\) an ordered support, either an interval of \(\R\) or a subset of \(\Nzero\). The support carries Lebesgue measure in the first case and the counting measure in the second, denoted \(\mu\) throughout.

To represent the summands or integrands, we introduce a positive \emph{weight} function, denoted \(w_{\lambda,\theta}:J\to(0,\infty)\),  with monotonicity parameter \(\lambda\in\Lambda\) and shifted parameter \(\theta\in\Theta\). Any further fixed parameters or arguments of the special function are suppressed from the notation and absorbed into \(w_{\lambda,\theta}\). The log-derivative of the weight function in a parameter yields the \(\lambda\)-{kernel} and the \(\theta\)-kernel
\begin{equation}
\label{def.kernel}
        K_{\lambda}(t)=\partial_\lambda\log w_{\lambda,\theta}(t),
        \qquad
        L_{\theta}(t)=\partial_\theta\log w_{\lambda,\theta}(t).
\end{equation}
Summing or integrating the weight function produces the function \(\Phi:\Lambda\times\Theta\to(0,\infty)\), given by
\begin{equation}
\label{def.phi}
\Phi(\lambda,\theta)=\int_J w_{\lambda,\theta}(t)\,\dx[\mu(t)],
\end{equation}
when $J \subseteq \mr$ and by  the series
\begin{equation*}
\Phi(\lambda,\theta)=\sum_{n\in J} w_{\lambda,\theta}(n),
\end{equation*}
when \(J\subseteq\mathbb{N}_0\).
For each \(\lambda \in \Lambda\), let \(P_\theta\) denote the probability law on \(J\) whose density with respect to \(\mu\) is \(w_{\lambda,\theta}/\Phi(\lambda,\theta)\):
\begin{equation}
\label{def.law}
P_{\theta}(\dx[t])=\frac{w_{\lambda,\theta}(t)}{\Phi(\lambda,\theta)}\,\dx[\mu(t)].
\end{equation}
Fix $\theta_1, \theta_2 \in \Theta$ and define the weight functions ratio
\begin{equation}
\label{def.weight_ratio}
\ell_{\theta_1, \theta_2}(t)=w_{\lambda,\theta_1}(t)/w_{\lambda,\theta_2}(t).
\end{equation}
This object is linked to the likelihood ratio, defined for two induced laws $P_{\theta_1}, P_{\theta_2}$, by
\begin{equation}
        \label{def.density_ratio}
        \frac{\dx[\Pone]}{\dx[\Ptwo]}(t) =  \frac{\Phi(\lambda,\theta_2)}{\Phi(\lambda,\theta_1)} \ell_{\theta_1, \theta_2}(t),
\end{equation}
where $\Phi(\lambda, \theta_2) / \Phi(\lambda, \theta_1) >0$ is a constant independent of $t$.

When $\theta$ coincides with $\lambda$, we drop the subscript $\theta$ from $w$ and the second argument of $\Phi$, writing $w_\lambda$ and $\Phi(\lambda)$. The induced laws and weight ratios are then carried with the subscript $\lambda$.
\begin{define}[Stochastic orders~\cite{ShakedShanthikumar}]
        \label{def:orders}
Let  $P_{\theta_1}$ and  $P_{\theta_2}$ be two probability measures on an ordered support \(J\).
\begin{itemize}

\item \emph{Usual stochastic order}: $P_{\theta_2}$ is said to be smaller than $P_{\theta_1}$ in the usual stochastic order, written \(P_{\theta_2} \st P_{\theta_1}\), if $P_{\theta_2}([t, \infty) \cap J) \le P_{\theta_1}([t, \infty) \cap J)$  for all $t \in J$. Equivalently, with \(X_i\sim P_{\theta_i}\), \(\E\,h(X_2)\le\E\,h(X_1)\) for every nondecreasing \(h\) for which both expectations exist.

\item \emph{Likelihood-ratio order}: $P_{\theta_2}$ is said to be smaller than $P_{\theta_1}$ in the likelihood-ratio order, written \(P_{\theta_2} \lr P_{\theta_1}\), if  the likelihood ratio ${\dx[P_{\theta_1}]}/{\dx[P_{\theta_2}]}$ is nondecreasing on the ordered union of the supports, with the conventions $0/0=0$ and $c/0=+\infty$ for $c>0$.

\end{itemize}
\end{define}

We recall the classical implication between the two orders.
\begin{lemma}[Theorem 1.C.1 in \cite{ShakedShanthikumar}]
\label{lem:lr-implies-st}
If \(P_{\theta_2}\lr P_{\theta_1}\), then \(P_{\theta_2}\st P_{\theta_1}\).
\end{lemma}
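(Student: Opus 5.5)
The plan is to prove the tail inequality directly from the monotonicity of the likelihood ratio, by a single-crossing argument. Write \(f_i\) for the density of \(P_{\theta_i}\) with respect to the common dominating measure \(\mu\) on \(J\); for the induced laws of \eqref{def.law} this is \(f_i=w_{\lambda,\theta_i}/\Phi(\lambda,\theta_i)\), which is strictly positive, so the conventions \(0/0\) and \(c/0\) are not needed there. The likelihood ratio \(r=f_1/f_2\) is nondecreasing. Both densities integrate to one, so \(\int_J (f_1-f_2)\,\dx[\mu]=0\).

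\textbf{Step 1: single crossing.} Set \(t_0=\inf\{t\in J: r(t)\ge 1\}\). Since \(r\) is nondecreasing, \(f_1-f_2\le 0\) for \(t<t_0\) and \(f_1-f_2\ge 0\) for \(t>t_0\). The point \(t_0\) itself may have either sign, but this does not matter for the next step. In the general setting, where the supports may differ, the conventions \(0/0=0\) and \(c/0=+\infty\) keep these sign statements valid: points where \(f_2=0<f_1\) have \(r=+\infty\) and lie above every point with finite ratio.

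\textbf{Step 2: tail comparison.} Fix \(t\in J\) and put \(D(t)=\int_{[t,\infty)\cap J}(f_1-f_2)\,\dx[\mu]\).
\begin{itemize}
\item If \(t>t_0\), the integrand is nonnegative on the whole region, so \(D(t)\ge 0\).
\item If \(t\le t_0\), then \(D(t)=-\int_{(-\infty,t)\cap J}(f_1-f_2)\,\dx[\mu]\) because the total integral is zero. The integrand here is nonpositive, since \(t\le t_0\), so again \(D(t)\ge 0\).
\end{itemize}
Hence \(P_{\theta_2}([t,\infty)\cap J)\le P_{\theta_1}([t,\infty)\cap J)\) for every \(t\in J\), which is the defining inequality for \(P_{\theta_2}\st P_{\theta_1}\) in Definition~\ref{def:orders}.

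\textbf{Step 3: test-function form.} The equivalent formulation with nondecreasing \(h\) is taken as part of the definition. If a check is wanted, it follows by writing
\[
\E\,h(X_1)-\E\,h(X_2)=\int_J \bigl(h-h(t_0)\bigr)(f_1-f_2)\,\dx[\mu].
\]
On each side of \(t_0\), the factors \(h-h(t_0)\) and \(f_1-f_2\) have the same sign, so the integrand is pointwise nonnegative.

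The only delicate point is the bookkeeping at the crossing point \(t_0\) and at points outside the common support. I will handle both by working throughout with the densities and the stated conventions, rather than with \(r\) at those points. Everything else is routine.
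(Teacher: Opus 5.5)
Your proof is correct, and it takes a different route from the paper, which gives no argument of its own and simply cites Theorem~1.C.1 of Shaked--Shanthikumar.

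\emph{The textbook route.} The usual argument behind that citation uses the cross-product form of the likelihood ratio order, $f_1(v)f_2(u)\ge f_1(u)f_2(v)$ for $u\le v$. Put $A_t=[t,\infty)\cap J$. Integrating the inequality over $\{u<t\le v\}$ gives $P_{\theta_1}(A_t)\,P_{\theta_2}(J\setminus A_t)\ge P_{\theta_2}(A_t)\,P_{\theta_1}(J\setminus A_t)$. Adding $P_{\theta_1}(A_t)P_{\theta_2}(A_t)$ to both sides yields the tail inequality. Integrating instead over $\{s\le u<t\le v\}$ shows that $t\mapsto P_{\theta_1}(A_t)/P_{\theta_2}(A_t)$ is nondecreasing, which is the stronger hazard-rate order.

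\emph{Your route.} Your single-crossing (cut) argument uses only the sign pattern of $f_1-f_2$: nonpositive below $t_0$, nonnegative above. This is strictly weaker than a monotone likelihood ratio. So your proof gives the usual stochastic order under exactly the kind of weaker shape condition mentioned in Remark~\ref{rem:weaker-than-lr}, and it also gives the test-function form directly. The cross-product route needs the full TP$_2$ structure, but in return it delivers the intermediate hazard-rate and reversed-hazard-rate orders.

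\emph{A minor point in Step~3.} The constant $h(t_0)$ requires $t_0\in J$. If $t_0\notin J$ (an excluded endpoint of the interval, or $\pm\infty$), then all of $J$ lies on one side of $t_0$. Hence $f_1-f_2$ has constant sign and zero integral, so $f_1=f_2$ $\mu$-almost everywhere and there is nothing to prove. Also, the subtraction of expectations needs both expectations to be finite.
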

By \eqref{def.density_ratio}, $\ell_{\theta_1, \theta_2}$ is proportional to the likelihood ratio with positive constant, so their monotonicity coincides. Therefore, the stochastic orders can be inferred from the weight ratio alone.\footnote{That's the reason we borrowed the standard notation of likelihood ratio to represent the weight ratio.}
\begin{lemma}
\label{lem:weight-order}
Let \(X_i\sim P_{\theta_i}\) for $i=1,2$.  If the weight ratio $\ell_{\theta_1, \theta_2}$ is nondecreasing, then \(P_{\theta_2}\st P_{\theta_1}\), so \(\E\,h(X_2)\le\E\,h(X_1)\) for every nondecreasing \(h\) for which both expectations exist.
\end{lemma}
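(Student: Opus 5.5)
We reduce the claim to Lemma~\ref{lem:lr-implies-st} through the identity \eqref{def.density_ratio}. For the fixed $\lambda$, both induced laws $P_{\theta_1}$ and $P_{\theta_2}$ have strictly positive densities $w_{\lambda,\theta_i}/\Phi(\lambda,\theta_i)$ on the common support $J$. This holds because $w_{\lambda,\theta}$ takes values in $(0,\infty)$ and $\Phi$ is finite and positive. Two consequences follow. The ordered union of the supports is simply $J$, and the conventions $0/0=0$ and $c/0=+\infty$ of Definition~\ref{def:orders} never come into play. The likelihood ratio is therefore the genuine pointwise quotient given in \eqref{def.density_ratio}.

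First, we observe that by \eqref{def.density_ratio}, $\mathrm{d}P_{\theta_1}/\mathrm{d}P_{\theta_2} = c\,\ell_{\theta_1,\theta_2}$ with constant $c=\Phi(\lambda,\theta_2)/\Phi(\lambda,\theta_1)>0$ independent of $t$. Multiplying a nondecreasing function by a positive constant preserves monotonicity. Hence the hypothesis that $\ell_{\theta_1,\theta_2}$ is nondecreasing gives that the likelihood ratio is nondecreasing on $J$, which is exactly $P_{\theta_2}\lr P_{\theta_1}$. Second, Lemma~\ref{lem:lr-implies-st} upgrades this to $P_{\theta_2}\st P_{\theta_1}$. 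Third, the expectation inequality $\E\,h(X_2)\le\E\,h(X_1)$ for nondecreasing $h$ with both expectations existing is the equivalent formulation already recorded in Definition~\ref{def:orders}, so it follows at once.

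The only point needing care is the last equivalence when $h$ is unbounded or when $J$ is a discrete subset of $\Nzero$ rather than an interval. If one does not take the equivalence in Definition~\ref{def:orders} for granted, we would verify it as follows. For bounded nondecreasing $h$, write $h$ as a mixture of indicators of up-sets $[t,\infty)\cap J$ via the layer-cake representation $h(x)=h_{\inf}+\int \mathbf 1\{h(x)>s\}\,\mathrm ds$, and note that each $\{h>s\}$ is an up-set of $J$. Each indicator term satisfies the desired inequality by the tail comparison $P_{\theta_2}([t,\infty)\cap J)\le P_{\theta_1}([t,\infty)\cap J)$. For general $h$ with existing expectations, pass to the limit by monotone convergence on the positive and negative parts, using truncations $\max(\min(h,n),-n)$, which remain nondecreasing. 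Alternatively, a direct proof avoids stochastic order entirely. If $\ell$ is nondecreasing, then $c\ell-1$ changes sign once, from negative to positive, at some point $t_0$. In that case
\[
\E h(X_1)-\E h(X_2)=\int_J (h(t)-h(t_0))\,(c\ell(t)-1)\,P_{\theta_2}(\mathrm dt)\ge 0,
\]
since both factors share sign on each side of $t_0$, and the subtracted constant contributes nothing because $\int(c\ell-1)\,\mathrm dP_{\theta_2}=0$.
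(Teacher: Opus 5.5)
Your main argument matches the paper's proof: the likelihood ratio $\mathrm{d}P_{\theta_1}/\mathrm{d}P_{\theta_2}$ is the positive constant multiple $\Phi(\lambda,\theta_2)\,\ell_{\theta_1,\theta_2}/\Phi(\lambda,\theta_1)$ of the weight ratio, so it is nondecreasing. This gives the likelihood-ratio order, then the usual stochastic order by Lemma~\ref{lem:lr-implies-st}, and the expectation inequality follows from Definition~\ref{def:orders}. Your extra checks go beyond the paper, which simply takes the expectation characterisation as part of the definition, and they are correct up to routine details (choosing $t_0\in J$, and finite expectations for the splitting): the positive densities making the $0/0$ conventions moot, the layer-cake verification of that characterisation, and the single-crossing identity $\E\,h(X_1)-\E\,h(X_2)=\int_J (h(t)-h(t_0))(c\ell(t)-1)\,P_{\theta_2}(\mathrm{d}t)\ge0$.
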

\begin{proof}
Since $\ell_{\theta_1, \theta_2}$ is nondecreasing, the likelihood ratio is nondecreasing, so \(P_{\theta_2}\lr P_{\theta_1}\) by Definition~\ref{def:orders}. Hence, \(P_{\theta_2}\st P_{\theta_1}\) by Lemma~\ref{lem:lr-implies-st}.
\end{proof}


In order to justify the validity of \eqref{def.kernel} and, more generally, differentiating \(\Phi\) under the sum or integral, we postulate the following regularity conditions.

\begin{assumption}
\label{asm:regularity}
For each parameter \(r \in \{\lambda, \theta\}\) with respect to which we differentiate, the map \(r\mapsto\log w_{\lambda,\theta}(t)\), with the other parameters fixed, is \(C^1\) for every \(t\in J\), and for every compact interval \(B\) in the domain of \(r\) there exists a majorant \(m_B\in L^1(J,\mu)\) such that
\[
        \sup_{r\in B}\bigl|\partial_r w_{\lambda,\theta}(t)\bigr|
        \le m_B(t)
        \qquad\text{for \(\mu\)-almost every }t\in J .
\]
\end{assumption}
By dominated convergence this permits differentiation under the sign in any single parameter, which we use without further mention. For the mixed-parameter log-derivative treated in the subsequent sections, we impose the following additional condition to justify the mixed differentiation and the interchange of \(\partial_\lambda\) and \(\partial_\theta\).
\begin{assumption}
\label{asm:mixed-regularity}
The map \((\lambda,\theta)\mapsto\log w_{\lambda,\theta}(t)\) is \(C^2\) on \(\Lambda\times\Theta\) for every \(t\in J\), and for every compact rectangle \(B\subseteq\Lambda\times\Theta\) there exists a majorant \(m_B\in L^1(J,\mu)\) such that
\[
        \sup_{(\lambda,\theta)\in B}
        \bigl|\partial_\lambda\partial_\theta w_{\lambda,\theta}(t)\bigr|
        \le m_B(t)
        \qquad\text{for \(\mu\)-almost every }t\in J .
\]
\end{assumption}


\vspace{5pt}

We conclude the preliminaries with sign results used repeatedly below. In what follows, $I \subseteq \mr$.

\begin{lemma}
\label{lem.sign.logderiv}
If \(f:I\to(0,\infty)\) is differentiable on $I$, then for every \(t\in I\)
\[
        \sgn\, f'(t) = \sgn\,(\log {f(t)})'.
\]
\end{lemma}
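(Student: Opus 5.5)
The plan is to apply the chain rule to \(\log\circ f\) and then use positivity of \(f\). Since \(f\) takes values in \((0,\infty)\), the composition \(\log f\) is well defined on \(I\). The logarithm is differentiable on \((0,\infty)\) with derivative \(1/u\), and \(f\) is differentiable at every \(t\in I\). The chain rule therefore gives, for every \(t\in I\),
\[
        (\log f(t))' = \frac{f'(t)}{f(t)} .
\]
At an endpoint of \(I\) that belongs to \(I\), the same identity holds with the appropriate one-sided derivative, because the chain rule is equally valid for one-sided derivatives.

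Next I will use that \(f(t)>0\), so \(1/f(t)\) is a strictly positive real number. The sign function is multiplicative, \(\sgn(ab)=\sgn(a)\,\sgn(b)\), and \(\sgn(c)=1\) for every \(c>0\). Hence
\[
        \sgn\,(\log f(t))' = \sgn\bigl(f'(t)\bigr)\,\sgn\bigl(1/f(t)\bigr) = \sgn f'(t).
\]
It may be worth checking the three cases of the paper's \(\sgn\) convention (positive, negative, zero) separately. In each case, multiplying \(f'(t)\) by the positive number \(1/f(t)\) preserves strict positivity, strict negativity and vanishing. So the identity holds pointwise for every \(t\in I\).

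There is no real obstacle here. The only point needing care is that differentiability of \(\log f\) is inherited from \(f\), and that is exactly where the hypothesis \(f>0\) enters. As a remark for later use, integrating this pointwise identity over \(I\) shows that \(f\) and \(\log f\) share the same monotonicity direction. In particular \(\varepsilon_f=\varepsilon_{\log f}\), which is how the lemma will be used to pass from log-derivatives of quotients to monotonicity of the quotients themselves.
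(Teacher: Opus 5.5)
Your proof is correct and is essentially the paper's argument: both rest on the chain-rule identity \((\log f)'=f'/f\) together with \(f>0\), so the two derivatives differ by a strictly positive factor and have the same sign. Your closing remark is also fine, although \(\varepsilon_f=\varepsilon_{\log f}\) follows directly from \(\log\) being strictly increasing, with no integration needed.
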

\begin{proof}
Since $f>0$, the assertion is implied by the identity $(\log f(t))'={f'(t)}/{f(t)}$.
\end{proof}
\begin{lemma}
\label{lem.sumproduct}
Given two functions \(f: I \to \mr\) and \(g: I \to(0,\infty)\), for every \(t_1,t_2\in I\) it holds that
\[
        \sgn\{f(t_2)g(t_1)-f(t_1)g(t_2)\} = \sgn\left\{ \frac{f(t_2)}{g(t_2)} - \frac{f(t_1)}{g(t_1)} \right\}.
\]
If \(f/g\) is nondecreasing, then for $t_2\ge t_1$
\[
        f(t_2)g(t_1)-f(t_1)g(t_2)\ge0,
\]
and the inequality is reversed if \(f/g\) is nonincreasing.
\end{lemma}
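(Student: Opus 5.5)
The plan is to reduce the first assertion to an algebraic identity, then read off the second assertion from it using the definition of monotonicity. Since \(g>0\), the product \(g(t_1)g(t_2)\) is strictly positive. Divide the expression \(f(t_2)g(t_1)-f(t_1)g(t_2)\) by this product to obtain the identity
\[
        \frac{f(t_2)g(t_1)-f(t_1)g(t_2)}{g(t_1)g(t_2)}
        = \frac{f(t_2)}{g(t_2)} - \frac{f(t_1)}{g(t_1)} .
\]
Multiplying a real number by the positive constant \(g(t_1)g(t_2)\) does not change its sign. Hence \(\sgn\{f(t_2)g(t_1)-f(t_1)g(t_2)\}\) equals \(\sgn\{f(t_2)/g(t_2)-f(t_1)/g(t_1)\}\), which is the first claim. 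This is the same positivity mechanism used in Lemma~\ref{lem.sign.logderiv}.

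For the second claim, suppose \(f/g\) is nondecreasing on \(I\) and \(t_2\ge t_1\). By definition, \(f(t_2)/g(t_2)\ge f(t_1)/g(t_1)\), so the right-hand sign is either \(0\) or \(1\). By the first part, the left-hand quantity then has sign \(0\) or \(1\), which means \(f(t_2)g(t_1)-f(t_1)g(t_2)\ge0\). If \(f/g\) is nonincreasing, the difference of quotients is \(\le 0\), its sign is \(0\) or \(-1\), and the same argument gives the reversed inequality.

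No step is a genuine obstacle. The only point needing care is that positivity of \(g\) is used at both \(t_1\) and \(t_2\), so that the product is strictly positive and the quotients \(f/g\) are well defined. No sign condition on \(f\) is required.
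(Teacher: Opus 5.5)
Your proof is correct and follows essentially the same route as the paper. The paper writes \(f(t_2)g(t_1)-f(t_1)g(t_2)=g(t_1)g(t_2)\{f(t_2)/g(t_2)-f(t_1)/g(t_1)\}\) with a positive factor \(g(t_1)g(t_2)\) and reads off the monotone cases, which is exactly your argument.
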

\begin{proof}
Since  \(g>0\) and $f(t_2)g(t_1)-f(t_1)g(t_2) = g(t_1)g(t_2) \left\{ {f(t_2)}/{g(t_2)} - {f(t_1)}/{g(t_1)} \right\},$ the factor \(g(t_1)g(t_2)\) is positive, so the signs agree. Consequently, for $t_2 \ge t_1$ and $f/g$ nondecreasing (resp. nonincreasing), the left-hand side is nonnegative (resp. nonpositive).
\end{proof}

The following is the classical Chebyshev integral inequality.
\begin{lemma}
\label{lem.chebyshev}
Let  $f,g$ be two real-valued functions on $I$ and let $X \sim P$, where $P$ is a probability law on $I$. Assume that $f,g, fg \in L^1(P)$. If $f$ and $g$ are monotone then
\[
        \varepsilon_f\varepsilon_g\, \Cov(f(X),g(X))\ge 0 .
\]
\end{lemma}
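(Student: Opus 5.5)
The plan is the standard symmetrisation argument. Let \(Y\) be an independent copy of \(X\), so that \((X,Y)\sim P\otimes P\) on \(I\times I\). The key identity I would establish first is
\[
        \Cov(f(X),g(X))=\tfrac12\,\E\bigl[(f(X)-f(Y))(g(X)-g(Y))\bigr].
\]
To get it, I expand the product on the right into four terms. Independence and the integrability hypotheses \(f,g,fg\in L^1(P)\) give \(\E[f(X)g(Y)]=\E f(X)\,\E g(Y)\), with each factor finite, and \(\E[f(Y)g(Y)]=\E[f(X)g(X)]\). This is where the assumption \(fg\in L^1(P)\) is used: it ensures the expansion is legitimate and that every term on the right is finite. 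Summing the four terms yields \(2\E[fg(X)]-2\E f(X)\,\E g(X)\), which is twice the covariance.

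Next comes the pointwise sign. For any \(x,y\in I\), the factor \(f(x)-f(y)\) has sign \(\varepsilon_f\,\sgn(x-y)\) or is zero, because \(f\) is monotone in direction \(\varepsilon_f\). Likewise \(g(x)-g(y)\) has sign \(\varepsilon_g\,\sgn(x-y)\) or is zero. Multiplying the two factors by \(\varepsilon_f\varepsilon_g\) therefore gives
\[
        \varepsilon_f\varepsilon_g\,(f(x)-f(y))(g(x)-g(y))\ge0
\]
for all \(x,y\). If either \(\varepsilon\) is zero, the asserted inequality is trivially \(0\ge0\). Taking expectations of this nonnegative integrand and using the identity from the first step gives the claim.

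The only delicate point is the integrability bookkeeping in the expansion, namely that \(\E|f(X)g(Y)|=\E|f|\,\E|g|<\infty\), which follows from Tonelli's theorem. Everything else is routine.
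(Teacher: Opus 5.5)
Your proposal is correct and follows essentially the same route as the paper: take an independent copy \(X'\) of \(X\), use the pointwise sign \(\varepsilon_f\varepsilon_g\,(f(X)-f(X'))(g(X)-g(X'))\ge0\), and identify the expectation of the product as \(2\Cov(f(X),g(X))\). Your explicit Tonelli check that \(\E|f(X)g(X')|=\E|f(X)|\,\E|g(X')|<\infty\) is integrability bookkeeping that the paper leaves implicit.
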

\begin{proof}
Let \(X'\) be an independent copy of $X$ and set $Y \coloneqq (f(X)-f(X'))(g(X)-g(X'))$. If \(f\) and \(g\) are monotone in the same direction then $Y \ge 0$ almost surely. If instead they are monotone in opposite directions, then $Y \le 0$ almost surely. Therefore, $\varepsilon_f\varepsilon_g Y\ge0$ almost surely. Taking expectations yields \(\varepsilon_f\varepsilon_g\,\E Y\ge0\), which proves the assertions since
\[
       \E Y  = 2\left\{\E[f(X)g(X)]-\E[f(X)]\,\E[g(X)]\right\} = 2\,\Cov(f(X),g(X)),
\]
\end{proof}

\subsection{The one-kernel Sign Rule}
\label{sub.onekernel}

In this section, Assumption~\ref{asm:regularity} is in force.  
\begin{assertion}
\label{lem:score-identity-final}
Fix $\theta \in \Theta$ and suppose \(X \sim P_{\theta}\), where $P_{\theta}$ is the law  defined in \eqref{def.law}. Then
\[
\partial_\lambda\log
\Phi(\lambda,\theta) = \E\, K_{\lambda}(X).
\]
\end{assertion}
\begin{proof}
Since differentiating under the integral is justified by Assumption~\ref{asm:regularity}, we have
\[
\partial_\lambda \Phi(\lambda,\theta)
=
\int_J
\partial_\lambda w_{\lambda,\theta}(t)\,\mu(\dx[t]).
\]
Dividing both sides by \(\Phi(\lambda,\theta)\) and using \eqref{def.kernel}, we obtain
\begin{alignat*}{2}
\partial_\lambda\log \Phi(\lambda,\theta)&=
 \int_J K_\lambda(t)  P_\theta(\dx[t]) =\E\, K_{\lambda}(X).
\end{alignat*}
\end{proof}
\begin{cor}
Assume $K_\lambda$ independent of \(\theta\). For every \(\theta_1,\theta_2\in\Theta\),
\label{thm:expectation-final}
\[
\partial_\lambda\log\frac{\Phi(\lambda,\theta_1)}{\Phi(\lambda,\theta_2)}
= \E\, K_{\lambda}(X_1)-\E\, K_{\lambda}(X_2),
\]
where \(X_i\sim P_{\theta_i}\)  for $i=1,2$.%
\end{cor}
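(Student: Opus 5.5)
The plan is to apply Assertion~\ref{lem:score-identity-final} twice, once at each value of the shifted parameter, and subtract. Fix \(\lambda\in\Lambda\) and \(\theta_1,\theta_2\in\Theta\). Assumption~\ref{asm:regularity} is in force, so Assertion~\ref{lem:score-identity-final} holds separately at \(\theta=\theta_1\) and at \(\theta=\theta_2\). This gives
\[
\partial_\lambda\log\Phi(\lambda,\theta_i)=\int_J \partial_\lambda\log w_{\lambda,\theta_i}(t)\,P_{\theta_i}(\mathrm{d}t),\qquad i=1,2 .
\]

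The key step is to use the hypothesis that \(K_\lambda\) does not depend on \(\theta\). Then \(\partial_\lambda\log w_{\lambda,\theta_1}(t)=\partial_\lambda\log w_{\lambda,\theta_2}(t)=K_\lambda(t)\) for every \(t\in J\). So the same kernel is integrated against the two different laws, and each right-hand side equals \(\E\,K_\lambda(X_i)\) with \(X_i\sim P_{\theta_i}\).

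Next, both \(\Phi(\lambda,\theta_i)\) are positive, so \(\log(\Phi(\lambda,\theta_1)/\Phi(\lambda,\theta_2))=\log\Phi(\lambda,\theta_1)-\log\Phi(\lambda,\theta_2)\). Differentiating this difference in \(\lambda\) term by term is legitimate, since each term is differentiable by the first step. Subtracting the two identities yields the claim.

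There is no real obstacle here. The only point to check is that both expectations are finite, so that the difference is not of the form \(\infty-\infty\). This follows from the domination in Assumption~\ref{asm:regularity}: for each \(i\), \(|K_\lambda|\,w_{\lambda,\theta_i}=|\partial_\lambda w_{\lambda,\theta_i}|\) is dominated by the majorant \(m_B\) (with \(\theta=\theta_i\) held fixed), which lies in \(L^1(J,\mu)\). Dividing by \(\Phi(\lambda,\theta_i)>0\) gives \(K_\lambda\in L^1(P_{\theta_i})\).
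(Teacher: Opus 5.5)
Your proposal is correct and is the paper's proof: apply Proposition~\ref{lem:score-identity-final} at \(\theta_1\) and at \(\theta_2\), use that the kernel \(K_\lambda\) is the same for both, and subtract. Your extra check that \(K_\lambda\in L^1(P_{\theta_i})\), via the majorant in Assumption~\ref{asm:regularity}, is a valid detail that the paper leaves implicit.
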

\begin{proof}
Take the difference of the two identities of Proposition~\ref{lem:score-identity-final} at \(\theta_1\) and \(\theta_2\).
\end{proof}
By Lemma~\ref{lem:weight-order}, monotonicity of the weight ratio orders the two laws. The sign of the quotient's derivative is determined by the expectation difference of a monotone kernel under the two laws.
\begin{thm}
\label{thm:lr-final}
Fix $\theta_1, \theta_2 \in \Theta$, suppose \(K_\lambda\) is independent of \(\theta\), and it is nondecreasing (resp. nonincreasing) in \(t\).  If $\Ptwo\st \Pone$  for every \(\lambda\in\Lambda\), the quotient
\[
        \lambda\longmapsto
        \frac{\Phi(\lambda,\theta_1)}{\Phi(\lambda,\theta_2)}
\]
is nondecreasing (resp. nonincreasing) on \(\Lambda\).%
\end{thm}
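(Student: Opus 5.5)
The plan is to combine Corollary~\ref{thm:expectation-final} with the characterisation of the usual stochastic order in Definition~\ref{def:orders}, and then pass from the sign of a log-derivative to monotonicity via Lemma~\ref{lem.sign.logderiv}.

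\emph{Step 1.} Fix \(\lambda\in\Lambda\) and let \(X_i\sim P_{\theta_i}\) for \(i=1,2\), where the laws are taken at this \(\lambda\). Since \(K_\lambda\) does not depend on \(\theta\), Corollary~\ref{thm:expectation-final} gives
\[
\partial_\lambda\log\frac{\Phi(\lambda,\theta_1)}{\Phi(\lambda,\theta_2)}=\E\,K_\lambda(X_1)-\E\,K_\lambda(X_2).
\]
Both expectations are finite. Indeed, Proposition~\ref{lem:score-identity-final} identifies each of them with \(\partial_\lambda\log\Phi(\lambda,\theta_i)\), and Assumption~\ref{asm:regularity} ensures \(|K_\lambda| w_{\lambda,\theta_i}=|\partial_\lambda w_{\lambda,\theta_i}|\in L^1(\mu)\).

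\emph{Step 2.} Suppose first that \(K_\lambda\) is nondecreasing in \(t\). By hypothesis \(P_{\theta_2}\st P_{\theta_1}\) at this \(\lambda\). The equivalent formulation in Definition~\ref{def:orders}, applied with \(h=K_\lambda\), yields \(\E\,K_\lambda(X_2)\le\E\,K_\lambda(X_1)\), so the log-derivative is nonnegative. If instead \(K_\lambda\) is nonincreasing, apply the same formulation with \(h=-K_\lambda\) to obtain a nonpositive log-derivative. In both cases the sign equals \(\varepsilon_K\), which is the one-kernel Sign Rule.

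\emph{Step 3.} The quotient \(\Phi(\lambda,\theta_1)/\Phi(\lambda,\theta_2)\) is positive and differentiable in \(\lambda\), by Assumption~\ref{asm:regularity} and dominated convergence. Lemma~\ref{lem.sign.logderiv} therefore transfers the sign of the log-derivative to the derivative of the quotient itself. Since this sign holds at every \(\lambda\) in the interval \(\Lambda\), the mean value theorem gives that the quotient is nondecreasing (resp. nonincreasing) on \(\Lambda\).

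The main point needing care is Step 2. One must use the expectation characterisation of \(\st\) for a possibly unbounded monotone \(h\), and this is only valid when both expectations exist. Step 1 supplies exactly that integrability. A minor subtlety is that ``\(K_\lambda\) nondecreasing in \(t\)'' is needed for each \(\lambda\), since the laws and the kernel both vary with \(\lambda\). The argument is pointwise in \(\lambda\), so this causes no difficulty.
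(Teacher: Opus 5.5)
Your proposal is correct and follows essentially the same route as the paper's proof: the log-derivative identity of Corollary~\ref{thm:expectation-final}, then the expectation characterisation of the usual stochastic order applied to \(K_\lambda\) (or \(-K_\lambda\)), then Lemma~\ref{lem.sign.logderiv} to pass to monotonicity of the quotient. Your extra checks make explicit what the paper leaves implicit: integrability of \(K_\lambda\) under each \(P_{\theta_i}\), obtained from Assumption~\ref{asm:regularity} through \(|K_\lambda|w_{\lambda,\theta_i}=|\partial_\lambda w_{\lambda,\theta_i}|\), and the mean value theorem step.
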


\begin{proof}
Let \(X_i\sim P_{\theta_i}\) for $i=1,2$.  By Corollary~\ref{thm:expectation-final},
\[
        \partial_\lambda\log
        \frac{\Phi(\lambda,\theta_1)}{\Phi(\lambda,\theta_2)}
        =
        \E\, K_\lambda(X_1)-\E\, K_\lambda(X_2).
\]
If $K_\lambda$ is nondecreasing, then the usual stochastic order compares expectations of nondecreasing functions, so \(\Ptwo\st \Pone\) yields \(\E\,K_\lambda(X_2)\le\E\,K_\lambda(X_1)\).  This is a nonnegative derivative of the logarithm of the quotient, hence the quotient is nondecreasing by Lemma~\ref{lem.sign.logderiv}.  The nonincreasing kernel case reverses the sign, which concludes the proof.
\end{proof}

Another useful result is a differentiation formula for the expectation of a test function under the induced law. When the test function itself depends on $\lambda$, the derivative then carries an extra covariance with the kernel, reflecting the dependence of the induced law on $\lambda$
\begin{assertion}
\label{lem:induced-expectation-derivative}
Fix \(\theta\in\Theta\) and let \(X\sim P_\theta\).  Suppose  $h_\lambda:J\to\mathbb R$ be a family of functions differentiable in \(\lambda\), with \(h_\lambda(X)\) and \(\partial_\lambda h_\lambda(X)\) integrable.

Define
$$R(\lambda) \coloneqq \E\,h_\lambda(X).$$
If \(h_\lambda w_{\lambda,\theta}\) satisfies the regularity condition of Assumption~\ref{asm:regularity} in \(\lambda\), then
\[
        R'(\lambda)
        =
        \E\,\partial_\lambda h_\lambda(X)
        +
        \Cov\bigl(h_\lambda(X),K_\lambda(X)\bigr).
\]
\end{assertion}
\begin{proof}
Since \(\partial_\lambda w_{\lambda,\theta}(t)=K_\lambda(t) w_{\lambda,\theta}(t)\) for $t \in J$ by \eqref{def.kernel} and \(\partial_\lambda\Phi^{-1}(\lambda, \theta)=-\Phi^{-1}(\lambda, \theta)\E\,K_\lambda(X)\) by Proposition~\ref{lem:score-identity-final}, proceed by direct differentiation to get
\begin{alignat*}{2}
        R'(\lambda)
        &=\partial_\lambda\int_J h_\lambda(t) \frac{w_{\lambda,\theta}(t)}{\Phi(\lambda, \theta)}\,\dx[\mu]\\
        &=\int_J(\partial_\lambda h_\lambda(t))\frac{w_{\lambda,\theta}(t)}{\Phi(\lambda, \theta)}\,\dx[t]+\int_J h_\lambda(t)  \frac{\partial_\lambda w_{\lambda,\theta}(t)}{\Phi(\lambda, \theta)}\,\dx[t]+\left(\partial_\lambda\Phi^{-1}(\lambda, \theta)\right)\int_J h_\lambda(t) w_{\lambda,\theta}(t)\,\dx[\mu]\\
        &=\E\,\partial_\lambda h_\lambda(X) + \int_J h_\lambda(t) K_\lambda(t) \frac{w_{\lambda,\theta}(t)}{\Phi(\lambda, \theta)}\,\dx[t]-\E\,K_\lambda(X))\int_J h_\lambda(t) \frac{w_{\lambda,\theta}(t)}{\Phi(\lambda, \theta)}\,\dx[\mu]\\
        &=\E\,\partial_\lambda h_\lambda(X)+\mE{h_\lambda(X) K_\lambda(X)} -\E\,K_\lambda(X)\,\E\,h_\lambda(X)\\
        &=\E\,\partial_\lambda h_\lambda(X)+\Cov(h_\lambda(X),K_\lambda(X)).
\end{alignat*}
\end{proof}

\begin{thm}[Sign Rule]
\label{thm:sign-rule}
Assume \(K_\lambda\) is independent of \(\theta\).  If  the kernel \(K_\lambda\) and the weight ratio $\ell_{\theta_1, \theta_2}$ are  monotone  for every \(\lambda\in\Lambda\) then
\[
        \varepsilon_K\,\varepsilon_\ell\,
        \partial_\lambda \log
        \frac{\Phi(\lambda,\theta_1)}{\Phi(\lambda,\theta_2)}
        \ge0 ,
\]
\end{thm}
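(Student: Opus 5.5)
The plan is to reduce the Sign Rule to the expectation identity of Corollary~\ref{thm:expectation-final} and then control the sign of the expectation difference by a stochastic order read off the weight ratio. Fix $\lambda\in\Lambda$ and let $X_i\sim P_{\theta_i}$ for $i=1,2$. By Corollary~\ref{thm:expectation-final},
\[
        \partial_\lambda\log\frac{\Phi(\lambda,\theta_1)}{\Phi(\lambda,\theta_2)}
        =\E\,K_\lambda(X_1)-\E\,K_\lambda(X_2).
\]
It therefore suffices to show $\varepsilon_K\varepsilon_\ell\{\E\,K_\lambda(X_1)-\E\,K_\lambda(X_2)\}\ge0$. If either sign vanishes the product is zero and there is nothing to prove. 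Otherwise I split into the four cases $\varepsilon_K,\varepsilon_\ell\in\{\pm1\}$.

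For the ordering step, when $\varepsilon_\ell=+1$, Lemma~\ref{lem:weight-order} gives $P_{\theta_2}\st P_{\theta_1}$. When $\varepsilon_\ell=-1$, the reciprocal weight ratio $\ell_{\theta_2,\theta_1}=1/\ell_{\theta_1,\theta_2}$ is nondecreasing, because $w>0$. Applying Lemma~\ref{lem:weight-order} with the roles of $\theta_1,\theta_2$ exchanged then gives $P_{\theta_1}\st P_{\theta_2}$. In both cases the usual stochastic order compares expectations of nondecreasing functions. To handle a nonincreasing kernel, I apply it to $h=\varepsilon_K K_\lambda$, which is nondecreasing whenever $\varepsilon_K\neq0$. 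This yields
\[
        \varepsilon_\ell\bigl\{\E\,h(X_1)-\E\,h(X_2)\bigr\}\ge0,
\]
and multiplying out by $\varepsilon_K$ gives the claim. For the cases $\varepsilon_\ell=\varepsilon_K=+1$ and $\varepsilon_\ell=+1,\varepsilon_K=-1$ this is exactly Theorem~\ref{thm:lr-final}; the remaining two cases follow by the swap.

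The only point needing care is integrability: Lemma~\ref{lem:weight-order} requires the expectations of $K_\lambda$ to exist. This follows from Assumption~\ref{asm:regularity}, since $|K_\lambda w_{\lambda,\theta}|=|\partial_\lambda w_{\lambda,\theta}|$ is dominated by an $L^1$ majorant; this is also what makes Corollary~\ref{thm:expectation-final} valid. I do not expect a genuine obstacle here. The main subtlety is the case $\varepsilon_\ell=-1$, where one must notice that reversing the monotonicity of the weight ratio reverses the stochastic order rather than requiring a new argument.
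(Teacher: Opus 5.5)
Your proof is correct and follows essentially the same route as the paper: reduce via Corollary~\ref{thm:expectation-final} to an expectation difference, get the stochastic order or its reverse from the monotone weight ratio through Lemma~\ref{lem:weight-order}, and cover the four sign combinations as in Theorem~\ref{thm:lr-final}. Your handling of the swap $\ell_{\theta_2,\theta_1}=1/\ell_{\theta_1,\theta_2}$ and of integrability via Assumption~\ref{asm:regularity} just spells out details the paper leaves implicit.
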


\begin{proof}
A nondecreasing \(\ell_{\theta_1, \theta_2}\) implies \(\Ptwo\st\Pone\) by Lemma~\ref{lem:weight-order} and the reverse order when it is nonincreasing. Applying Theorem~\ref{thm:lr-final} with \(K_\lambda\) nondecreasing and then with \(K_\lambda\) nonincreasing yields four combinations encoded by the stated inequality.
\end{proof}

\begin{rem}
\label{rem:weaker-than-lr}
Monotonicity of $\ell_{\theta_1, \theta_2}$  is sufficient but not necessary. Weaker shape conditions on the likelihood ratio, transferable to $\ell_{\theta_1, \theta_2}$, are discussed in~\cite{DerbaziPairwise, Derbazi2026kernel}.
\end{rem}

\begin{cor}[Biernacki--Krzyż form]
\label{cor:gen-bk}
Let \(v_1, v_2: \Lambda \times J \to(0,\infty)\) be differentiable in the first argument such that for each $\lambda \in \Lambda$, $\partial_\lambda\log v_1(\lambda,t) = \partial_\lambda\log v_2(\lambda,t) = K_\lambda(t)$, for all $t\in J.$

Define
\[
F_i(\lambda)=\int_J v_i(\lambda,t)\,\dx[\mu(t)],
\qquad i=1,2,
\]
and
\[
        \ell(t) \coloneqq \frac{v_1(\lambda,t)}{v_2(\lambda,t)}.
\]
If \(K_\lambda\) and \(\ell\) are both monotone for every \(\lambda\in\Lambda\), then
\[
        \varepsilon_K\varepsilon_\ell\,
        \partial_\lambda\frac{F_1(\lambda)}{F_2(\lambda)}
        \ge0 .
\]
\end{cor}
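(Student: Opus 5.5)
The plan is to recast the Biernacki--Krzyż data as an instance of the Sign Rule, Theorem~\ref{thm:sign-rule}, by introducing an artificial two-point shifted parameter. Take $\Theta=\{\theta_1,\theta_2\}$ with $\theta_1<\theta_2$, and define the weight
\[
w_{\lambda,\theta_i}(t)\coloneqq v_i(\lambda,t),\qquad i=1,2 .
\]
Then $\Phi(\lambda,\theta_i)=F_i(\lambda)$, and the induced laws $P_{\theta_i}$ of \eqref{def.law} have densities $v_i(\lambda,\cdot)/F_i(\lambda)$. By hypothesis the $\lambda$-kernel $\partial_\lambda\log w_{\lambda,\theta}(t)=K_\lambda(t)$ is the same for both values of $\theta$, so it is independent of $\theta$, as Corollary~\ref{thm:expectation-final} requires. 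The weight ratio \eqref{def.weight_ratio} is exactly $\ell_{\theta_1,\theta_2}=v_1/v_2=\ell$.

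Next, I will check that the standing regularity holds. The statement only assumes differentiability, so I will read Assumption~\ref{asm:regularity} (finiteness of $F_i$ and an integrable local majorant for $\partial_\lambda v_i$) as implicitly in force, as it is throughout this subsection. Only differentiation in $\lambda$ is needed, because no $\theta$-derivative is ever taken on the discrete set $\Theta$. This is the one point needing care. Once it is granted, Proposition~\ref{lem:score-identity-final} applies at each $\theta_i$.

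Then I will apply Theorem~\ref{thm:sign-rule} directly. It gives
\[
\varepsilon_K\varepsilon_\ell\,\partial_\lambda\log\frac{F_1(\lambda)}{F_2(\lambda)}\ge0 .
\]
Finally, I will pass from the logarithmic derivative to the derivative of the quotient itself. Because $F_1/F_2>0$, Lemma~\ref{lem.sign.logderiv} gives
\[
\sgn\,\partial_\lambda(F_1/F_2)=\sgn\,\partial_\lambda\log(F_1/F_2),
\]
and multiplying by $\varepsilon_K\varepsilon_\ell$ yields the claim. If either $\varepsilon$ is zero the inequality is trivial, although the hypothesis of monotonicity excludes this case anyway.

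The main obstacle is not analytic but definitional. The Sign Rule was phrased with $\theta$ ranging over an ordered parameter set, and the kernel is defined through $w_{\lambda,\theta}$. I therefore need to confirm that a two-point $\Theta$ is admissible, which it is since $\Theta$ is only required to be ordered. I also need to confirm that the proof of Theorem~\ref{thm:lr-final} uses the kernel only through Corollary~\ref{thm:expectation-final}, which holds pointwise in $\theta$. With those two checks in place the corollary is immediate.
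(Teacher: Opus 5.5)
Your proposal is correct and follows the paper's proof: the paper likewise sets $w_{\lambda,i}=v_i(\lambda,\cdot)$ with the two-point shifted parameter $\theta\in\{1,2\}$, and applies Theorem~\ref{thm:sign-rule} because the common kernel is independent of $\theta$. Your two additions fill in steps the paper leaves implicit: the passage from the log-derivative to the derivative of $F_1/F_2$ via Lemma~\ref{lem.sign.logderiv}, and the remark that Assumption~\ref{asm:regularity} must be tacitly in force.
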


\begin{proof}
Take $w_{\lambda, i}(t) = v_i(\lambda, t)$. The result follows by application of Theorem~\ref{thm:sign-rule} since the common kernel is independent of the shifted parameter \(\theta\in\{1,2\}\).
\end{proof}

\subsection{The two-kernel Sign Rule}
\label{subsec.two_kernel}
In this section, Assumptions~\ref{asm:regularity} and~\ref{asm:mixed-regularity} are in force. Differentiating \(\log\Phi\) twice, once in $\lambda$ and once in \(\theta\), gives the following covariance relation.

\begin{assertion}
\label{lem:mixed-identity}
Fix \((\lambda,\theta) \in \Lambda \times \Theta\) and let \(X\sim P_\theta\). If $K_\lambda$ is independent of $\theta$, then
\[
        \partial_\theta\partial_\lambda\log\Phi(\lambda,\theta)
        =\Cov(K_\lambda(X),L_\theta(X)).
\]
\end{assertion}
\begin{proof}
By Proposition~\ref{lem:score-identity-final},
$
        \partial_\lambda\log\Phi = \E\,K_\lambda(X) = \int_J K_\lambda(t)\,P_\theta(\dx[t]).
$
Write \(p_\theta(t)=w_{\lambda,\theta}(t)/\Phi(\lambda,\theta)\) for the density of \(P_\theta\) with respect to the reference measure $\mu$.  Differentiating this density in \(\theta\) yields
\[
        \partial_\theta p_\theta(t)
        =
        p_\theta(t)\bigl(L_\theta(t)-\E\,L_\theta(X)\bigr).
\]
Since the differentiation in \(\theta\) and the interchange of \(\partial_\lambda\) and \(\partial_\theta\) is justified by Assumptions~\ref{asm:regularity} and~\ref{asm:mixed-regularity}, it follows that
\[
        \partial_\theta\partial_\lambda\log\Phi
        =\int_J K_\lambda(t)\,\partial_\theta p_\theta(t)\,\dx[\mu(t)]
        =\E\,\bigl[K_\lambda\,(L_\theta-\E\,L_\theta)\bigr]
        =\E\,[K_\lambda L_\theta]-\E\,K_\lambda\,\E\,L_\theta ,
\]
\end{proof}
\begin{thm}[Two-kernel Sign Rule]
\label{thm:mixed}
Assume that \(K_\lambda\) is independent of \(\theta\).  If the kernels \(K_\lambda\) and \(L_\theta\) are monotone, with signs \(\varepsilon_K\) and \(\varepsilon_L\), respectively, then
\[
        \varepsilon_K\,\varepsilon_L\,
        \partial_\theta\partial_\lambda\log\Phi(\lambda,\theta)
        \ge0 .
\]
\end{thm}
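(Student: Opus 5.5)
The plan is to combine the covariance identity of Proposition~\ref{lem:mixed-identity} with the Chebyshev inequality of Lemma~\ref{lem.chebyshev}. Fix \((\lambda,\theta)\in\Lambda\times\Theta\) and let \(X\sim P_\theta\). Because \(K_\lambda\) does not depend on \(\theta\), and Assumptions~\ref{asm:regularity} and~\ref{asm:mixed-regularity} are in force, Proposition~\ref{lem:mixed-identity} gives
\[
        \partial_\theta\partial_\lambda\log\Phi(\lambda,\theta)=\Cov\bigl(K_\lambda(X),L_\theta(X)\bigr).
\]
So the sign of the mixed derivative is exactly the sign of a covariance of two functions of the same random variable \(X\).

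Next I apply Lemma~\ref{lem.chebyshev} with \(f=K_\lambda\), \(g=L_\theta\) and \(P=P_\theta\) on the ordered support \(J\). By hypothesis both kernels are monotone in \(t\), with directions \(\varepsilon_K\) and \(\varepsilon_L\). The lemma then yields
\[
        \varepsilon_K\varepsilon_L\Cov\bigl(K_\lambda(X),L_\theta(X)\bigr)\ge0,
\]
and substituting the identity above gives the claim. The lemma is stated for \(I\subseteq\R\), but its proof only uses the order on the support together with an independent copy \(X'\). It therefore applies verbatim when \(J\subseteq\Nzero\) with counting measure.

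The only delicate point is integrability. Lemma~\ref{lem.chebyshev} requires \(K_\lambda,L_\theta,K_\lambda L_\theta\in L^1(P_\theta)\). I will extract these from the assumptions as follows.
\begin{itemize}
\item \(K_\lambda\in L^1(P_\theta)\), because \(|K_\lambda|w_{\lambda,\theta}=|\partial_\lambda w_{\lambda,\theta}|\) is dominated by the majorant in Assumption~\ref{asm:regularity}. The same argument gives \(L_\theta\in L^1(P_\theta)\).
\item For the product, write \(\partial_\lambda\partial_\theta w=(\partial_\lambda L_\theta + K_\lambda L_\theta)w\). Since \(K_\lambda\) is independent of \(\theta\), we have \(\partial_\lambda L_\theta=\partial_\theta K_\lambda=0\). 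Hence \(K_\lambda L_\theta w=\partial_\lambda\partial_\theta w\), which is integrable by Assumption~\ref{asm:mixed-regularity}.
\end{itemize}
This is the step I expect to need care. Once it is settled, the proof is a two-line combination of the earlier results.
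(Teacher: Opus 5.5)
Your proposal is correct and follows the paper's proof exactly: the paper also just combines the covariance identity of Proposition~\ref{lem:mixed-identity} with Chebyshev's inequality (Lemma~\ref{lem.chebyshev}), applied to \(f=K_\lambda\), \(g=L_\theta\) under \(P_\theta\). Your integrability check, which the paper leaves implicit, is sound: the step \(\partial_\lambda L_\theta=\partial_\theta K_\lambda=0\) uses symmetry of mixed partials, which the \(C^2\) hypothesis of Assumption~\ref{asm:mixed-regularity} guarantees, and then \(K_\lambda L_\theta\, w_{\lambda,\theta}=\partial_\lambda\partial_\theta w_{\lambda,\theta}\) is dominated by the integrable majorant.
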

\begin{proof}
Apply Chebyshev's integral inequality (Lemma~\ref{lem.chebyshev}) to the law \(P_\theta\) with \(f=K_\lambda\), \(g=L_\theta\), and use the identity of Proposition~\ref{lem:mixed-identity}.
\end{proof}

When the kernels are strictly monotone, the covariance identity also gives the equality case.

\begin{cor}
\label{cor:mixed-equality}
Under the hypotheses of Theorem~\ref{thm:mixed},
whenever \(K_\lambda\) and \(L_\theta\) are strictly monotone and the support of \(P_\theta\) has at least two points, the Sign Rule inequality is strict.
\end{cor}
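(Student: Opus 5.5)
By Proposition~\ref{lem:mixed-identity}, the claim reduces to showing that \(\varepsilon_K\varepsilon_L\Cov(K_\lambda(X),L_\theta(X))>0\) for \(X\sim P_\theta\). Strictly monotone functions have nonzero signs \(\varepsilon_K,\varepsilon_L\in\{\pm1\}\). I will revisit the proof of Lemma~\ref{lem.chebyshev} and sharpen its almost-sure inequality to a strict inequality on a set of positive probability. Let \(X'\) be an independent copy of \(X\) and set
\[
Y=\bigl(K_\lambda(X)-K_\lambda(X')\bigr)\bigl(L_\theta(X)-L_\theta(X')\bigr),
\]
so that \(2\Cov(K_\lambda(X),L_\theta(X))=\E Y\) and \(\varepsilon_K\varepsilon_L Y\ge0\) almost surely. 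Integrability of \(K_\lambda, L_\theta, K_\lambda L_\theta\) under \(P_\theta\) is taken from the standing assumptions, exactly as in Theorem~\ref{thm:mixed}.

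The key step is to show \(\varepsilon_K\varepsilon_L Y>0\) on the event \(\{X\neq X'\}\). If \(t\neq s\), strict monotonicity gives \(K_\lambda(t)\neq K_\lambda(s)\) and \(L_\theta(t)\neq L_\theta(s)\). Both factors of \(Y\) are then nonzero, and their product has sign \(\varepsilon_K\varepsilon_L\). It therefore suffices to prove \(\Pr(X\neq X')>0\).

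Here I will use the hypothesis that the support of \(P_\theta\) contains at least two points. Suppose instead that \(\Pr(X\neq X')=0\). Then \(\Pr(X=X')=\int P_\theta(\{s\})\,P_\theta(\dx[s])=1\), which forces \(P_\theta\) to be a point mass. That contradicts the two-point support assumption.

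In the counting-measure case this is immediate: the weight \(w_{\lambda,\theta}>0\) on \(J\), so two points \(s\neq t\) in the support already give \(\Pr(X=s,X'=t)>0\). In the Lebesgue case \(P_\theta\) has no atoms, so \(\Pr(X=X')=0\) outright.

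Combining these, \(\varepsilon_K\varepsilon_L Y\) is nonnegative and strictly positive with positive probability. Hence \(\varepsilon_K\varepsilon_L\E Y>0\), and the strict inequality follows by Proposition~\ref{lem:mixed-identity}.

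The only delicate step is the positivity of \(\Pr(X\neq X')\). I expect to handle it by the atom computation above rather than by any general support argument.
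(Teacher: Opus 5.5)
Your proof is correct and follows the paper's route: reduce to the covariance via Proposition~\ref{lem:mixed-identity} and use the equality case of the Chebyshev inequality (Lemma~\ref{lem.chebyshev}) together with the two-point support. The paper cites that equality case, whereas you prove strictness directly from the symmetrised variable \(Y\), showing \(\varepsilon_K\varepsilon_L Y>0\) on the event \(\{X\neq X'\}\), which has positive probability; this makes your argument more self-contained.
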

\begin{proof}
By Proposition~\ref{lem:mixed-identity}, \(\partial_\theta\partial_\lambda\log\Phi=\Cov(K_\lambda(X),L_\theta(X))\). Equality in the Chebyshev integral inequality for monotone \(K_\lambda,L_\theta\) forces \(K_\lambda(X)\) or \(L_\theta(X)\) to be constant almost surely. Since \(K_\lambda\) and \(L_\theta\) are strictly monotone, either image is constant only if \(X\) is concentrated on a single point of \(J\), so equality forces \(X\) to be \(P_\theta\)-almost surely constant. Conversely, if the support of \(P_\theta\) has at least two points, then \(X\) is nondegenerate, both images are nondegenerate strictly monotone functions of \(X\), and the covariance has the strict sign \(\varepsilon_K\varepsilon_L\).
\end{proof}

The sign of the mixed derivative is the product of the two kernel signs, the same form the Sign Rule uses, with the kernel \(L_\theta\) in the shifted parameter in place of the weight ratio.  The next corollary states the determinantal form.
\begin{cor}
\label{cor:determinant}
Under the hypotheses of Theorem~\ref{thm:mixed}, for \(\xi_1\le\xi_2\) in \(\Lambda\) and \(\eta_1\le\eta_2\) in \(\Theta\),
\[
        \varepsilon_K\,\varepsilon_L\,
        \biggl\{
        \Phi(\xi_2,\eta_2)\Phi(\xi_1,\eta_1)
        -
        \Phi(\xi_1,\eta_2)\Phi(\xi_2,\eta_1)\biggr\}\ge0 .
\]
Thus, \(\Phi\) is log-supermodular in \((\lambda,\theta)\) when \(\varepsilon_K\varepsilon_L=+1\), and log-submodular when \(\varepsilon_K\varepsilon_L=-1\). The inequality is strict on nondegenerate rectangles whenever the strict covariance sign of Corollary~\ref{cor:mixed-equality} holds throughout the rectangle.
\end{cor}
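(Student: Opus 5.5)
The plan is to integrate the mixed-derivative sign of Theorem~\ref{thm:mixed} over the rectangle \([\xi_1,\xi_2]\times[\eta_1,\eta_2]\). Set \(G(\lambda,\theta)=\log\Phi(\lambda,\theta)\). Assumptions~\ref{asm:regularity} and~\ref{asm:mixed-regularity} make \(G\) \(C^1\) in each variable and give a continuous mixed partial \(\partial_\theta\partial_\lambda G\), obtained by dominated convergence. We then have the rectangle identity
\[
        G(\xi_2,\eta_2)-G(\xi_1,\eta_2)-G(\xi_2,\eta_1)+G(\xi_1,\eta_1)
        =\int_{\eta_1}^{\eta_2}\!\!\int_{\xi_1}^{\xi_2}\partial_\theta\partial_\lambda G(\lambda,\theta)\,\dx[\lambda]\,\dx[\theta].
\]
To prove it, write the left side as \(\int_{\xi_1}^{\xi_2}\{\partial_\lambda G(\lambda,\eta_2)-\partial_\lambda G(\lambda,\eta_1)\}\,\dx[\lambda]\). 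For each fixed \(\lambda\), write the integrand as \(\int_{\eta_1}^{\eta_2}\partial_\theta\partial_\lambda G\,\dx[\theta]\); this is legitimate because \(\partial_\lambda G=\E K_\lambda(X)\) is differentiable in \(\theta\) by Proposition~\ref{lem:mixed-identity}. Fubini applies since the integrand is continuous on a compact rectangle.

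By Theorem~\ref{thm:mixed}, \(\varepsilon_K\varepsilon_L\,\partial_\theta\partial_\lambda G\ge0\) at every point. Multiplying the identity by \(\varepsilon_K\varepsilon_L\) therefore gives
\[
        \varepsilon_K\varepsilon_L\bigl\{G(\xi_2,\eta_2)+G(\xi_1,\eta_1)-G(\xi_1,\eta_2)-G(\xi_2,\eta_1)\bigr\}\ge0 .
\]
This is the statement that \(\frac{\Phi(\xi_2,\eta_2)\Phi(\xi_1,\eta_1)}{\Phi(\xi_1,\eta_2)\Phi(\xi_2,\eta_1)}\) is \(\ge1\) when \(\varepsilon_K\varepsilon_L=1\) and \(\le1\) when \(\varepsilon_K\varepsilon_L=-1\). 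Since \(\Phi>0\), we convert the ratio to a difference as in Lemma~\ref{lem.sumproduct}: multiply by the positive quantity \(\Phi(\xi_1,\eta_2)\Phi(\xi_2,\eta_1)\). The sign of the ratio minus one then equals the sign of the determinant \(\Phi(\xi_2,\eta_2)\Phi(\xi_1,\eta_1)-\Phi(\xi_1,\eta_2)\Phi(\xi_2,\eta_1)\), which gives the displayed inequality. When \(\varepsilon_K\varepsilon_L=0\) the claim is trivial. Log-supermodularity for \(\varepsilon_K\varepsilon_L=+1\) and log-submodularity for \(\varepsilon_K\varepsilon_L=-1\) are just restatements of the same inequality.

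For the strict version, take a nondegenerate rectangle (\(\xi_1<\xi_2\), \(\eta_1<\eta_2\)) on which Corollary~\ref{cor:mixed-equality} gives \(\varepsilon_K\varepsilon_L\,\partial_\theta\partial_\lambda G>0\) at every point. The double integral of a continuous, strictly positive function over a set of positive area is then strictly positive, so the inequality is strict. The main obstacle is the rectangle identity itself, namely justifying that \(\partial_\lambda G\) is absolutely continuous in \(\theta\) with derivative \(\Cov(K_\lambda,L_\theta)\), jointly measurable and locally bounded. The approach is to use the compact-rectangle majorant of Assumption~\ref{asm:mixed-regularity} for the numerator \(\int \partial_\lambda\partial_\theta w\,\dx[\mu]\), and continuity and positivity of \(\Phi\) for the denominators. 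If joint continuity is hard to secure, the fallback is to use only the mean value theorem in \(\theta\) applied to \(\lambda\mapsto G(\lambda,\eta_2)-G(\lambda,\eta_1)\), which avoids Fubini.
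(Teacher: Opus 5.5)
Your proposal is correct and follows essentially the same route as the paper: integrate the sign $\varepsilon_K\varepsilon_L\,\partial_\theta\partial_\lambda\log\Phi\ge0$ from Theorem~\ref{thm:mixed} over $[\xi_1,\xi_2]\times[\eta_1,\eta_2]$ to get the sign of the logarithm of the cross-ratio, then multiply by the positive product $\Phi(\xi_1,\eta_2)\Phi(\xi_2,\eta_1)$. Your justification of the rectangle identity, your mean-value fallback (which needs only the $\theta$-differentiability of $\partial_\lambda\log\Phi$ from Proposition~\ref{lem:mixed-identity}), and your strictness argument supply details that the paper either asserts (via its claim that $\log\Phi$ is $C^2$) or omits.
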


\begin{proof}
Theorem~\ref{thm:mixed} gives
$
        \varepsilon_K\varepsilon_L\,
        \partial_\theta\partial_\lambda\log\Phi(\lambda,\theta)\ge0,
$
and since \(\log\Phi\) is \(C^2\) on $\Lambda \times \Theta$ by Assumption~\ref{asm:mixed-regularity}, integrate over the rectangle \([\xi_1,\xi_2]\times[\eta_1,\eta_2]\) to obtain
\[
\varepsilon_K\varepsilon_L
\log\frac{\Phi(\xi_2,\eta_2)\Phi(\xi_1,\eta_1)}
        {\Phi(\xi_1,\eta_2)\Phi(\xi_2,\eta_1)}
\ge0,
\]
which is equivalent to
\[
\varepsilon_K\varepsilon_L
\left\{
\frac{\Phi(\xi_2,\eta_2)\Phi(\xi_1,\eta_1)}
     {\Phi(\xi_1,\eta_2)\Phi(\xi_2,\eta_1)}
-1
\right\}\ge0.
\]
Multiply both sides by \(\Phi(\xi_1,\eta_2)\Phi(\xi_2,\eta_1)\), which is positive, to get the stated inequality.
\end{proof}
\begin{rem}
\label{rem.shift}
Suppose \(\Phi(\lambda,\theta)=F(\lambda+\theta)\). For \(\xi_1\le\xi_2\), \(\eta_1\le\eta_2\) set $u=\xi_1+\eta_1$, $\alpha=\xi_2-\xi_1,$ and $\beta=\eta_2-\eta_1$. In this setting, the determinant in Corollary~\ref{cor:determinant} becomes
\[
        F(u+\alpha+\beta)F(u)-F(u+\alpha)F(u+\beta),
        \qquad \alpha,\beta\ge0 .
\]
Thus, in the one-parameter case, the determinant inequality is the Tur\'an-type form of log-convexity of \(F\), with the opposite sign for log-concavity.
\end{rem}

A useful special case is that of positive power series. Let
\[
        \Phi(x,\theta)=\sum_{n\ge0} c_n(\theta)x^n,
        \qquad c_n(\theta)>0,
\]
for \(0<x<R(\theta)\), where \(R(\theta)\) denotes the radius of convergence, so that \(\Phi(x,\theta)<\infty\). The argument $x$ enters every term as \(x^n\), so its kernel is the index itself,
\[
        K_x(n)=\partial_x\log\bigl(c_n(\theta)x^n\bigr)=\frac{n}{x},
\]
increasing in \(n\) regardless of the family.  The mixed derivative against any parameter \(\theta\) is therefore governed by that parameter's kernel alone.

\begin{cor}
\label{cor:argument-mixed}
For a positive power series in \(x>0\), with \(\theta\)-kernel \(L_\theta\) of sign \(\varepsilon_L\),
\[
        \varepsilon_L\,\partial_x\partial_\theta\log\Phi(x, \theta)\ge0 .
\]
In particular \(\Phi\) is log-supermodular in \((x,\theta)\) when \(L_\theta\) is increasing, and log-submodular when it is decreasing.
\end{cor}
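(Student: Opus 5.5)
The plan is to specialise the two-kernel Sign Rule to power series. We take \(J=\Nzero\) with counting measure, \(\lambda=x\in\Lambda=(0,R(\theta))\), and weight \(w_{x,\theta}(n)=c_n(\theta)x^n\). The \(x\)-kernel is then \(K_x(n)=n/x\).

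\textbf{Step 1: the \(x\)-kernel.} This kernel depends on \(x\) but not on \(\theta\), as Theorem~\ref{thm:mixed} requires. It is strictly increasing in \(n\) for every fixed \(x>0\), so \(\varepsilon_K=+1\).

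\textbf{Step 2: the \(\theta\)-kernel.} Since \(\log x^n\) does not depend on \(\theta\), we have \(L_\theta(n)=\partial_\theta\log c_n(\theta)=c_n'(\theta)/c_n(\theta)\). It is monotone in \(n\) with sign \(\varepsilon_L\) by hypothesis.

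\textbf{Step 3: apply the two-kernel rule.} Proposition~\ref{lem:mixed-identity} gives
\[
\partial_\theta\partial_x\log\Phi(x,\theta)=\Cov\bigl(X/x,\,L_\theta(X)\bigr),\qquad X\sim P_\theta .
\]
Theorem~\ref{thm:mixed} then yields \(\varepsilon_K\varepsilon_L\,\partial_\theta\partial_x\log\Phi\ge0\), which is the claim because \(\varepsilon_K=1\). The mixed partials commute under Assumption~\ref{asm:mixed-regularity}, so the order of differentiation in the statement is immaterial.

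\textbf{Step 4: log-super/submodularity.} Corollary~\ref{cor:determinant} gives the determinantal inequality on rectangles. This is log-supermodularity when \(\varepsilon_L=+1\) and log-submodularity when \(\varepsilon_L=-1\).

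\textbf{Main obstacle: regularity.} The real work is checking Assumptions~\ref{asm:regularity} and~\ref{asm:mixed-regularity} on the domain \(\{(x,\theta):0<x<R(\theta)\}\). The paper's framework is stated on a product \(\Lambda\times\Theta\), so I would restrict to compact rectangles \([x_1,x_2]\times[\theta_1,\theta_2]\) with \(x_2<\inf_{\theta\in[\theta_1,\theta_2]}R(\theta)\). On such a rectangle I would dominate the three derivatives \(n c_n(\theta)x^{n-1}\), \(|c_n'(\theta)|x^n\) and \(n|c_n'(\theta)|x^{n-1}\) by terms of a convergent series \(\sum n\,C_n\rho^{n}\), with \(\rho\) slightly larger than \(x_2\) and \(C_n=\sup_\theta\bigl(c_n+|c_n'|\bigr)\). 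This domination is an implicit standing assumption for the power series considered. The Chebyshev step also needs \(K_x(X)\), \(L_\theta(X)\) and their product to be integrable. I would impose this domination as part of the hypothesis, as the paper does throughout via Assumption~\ref{asm:mixed-regularity}.
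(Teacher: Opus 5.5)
Your proposal is correct and follows the paper's proof: apply Theorem~\ref{thm:mixed} with $\lambda=x$, note that $K_x(n)=n/x$ is increasing in $n$ and independent of $\theta$, so $\varepsilon_K=+1$ and $\varepsilon_K\varepsilon_L=\varepsilon_L$, and then get super/submodularity from Corollary~\ref{cor:determinant}. Your additional care is a sensible refinement of what the paper leaves implicit: you restrict to compact rectangles inside the non-product domain $\{0<x<R(\theta)\}$, and you treat the domination as a standing hypothesis.
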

\begin{proof}
Apply Theorem~\ref{thm:mixed} with \(\lambda=x\). Since \(K_x(n)=n/x\) is increasing,  \(\varepsilon_K=+1\) and hence, \(\varepsilon_K\varepsilon_L=\varepsilon_L\).
\end{proof}

\subsection{The variance rule and log-convexity}
\label{subsec.variance-rule}
Consider now the one-parameter function
\[\Phi(\lambda)=\int_J w_\lambda(t)\,\mu(\dx[t]),\]
where $w_\lambda: J \to (0, \infty)$. As before, $$K_\lambda(t)=\partial_\lambda\log w_\lambda(t).$$
In this setting, the curvature of $\Phi$ is governed by both the variance and the derivative of $K_\lambda$.
\begin{lemma}
        \label{lem:variance}
Let \(X\sim P_\lambda\), where $P_\lambda$ is the probability law whose density, with respect to $\mu$, is \(w_{\lambda}(t)/\Phi(\lambda)\). If the same regularity condition holds for \(K_\lambda w_\lambda\) in \(\lambda\), then
\[
 \partial_\lambda^2\log\Phi(\lambda)
 =
 \operatorname{Var}\bigl(K_\lambda(X)\bigr)+\E\,\partial_\lambda K_\lambda(X).
\]
If in addition \(\partial_\lambda K_\lambda(t)\ge0\) for all \(t\), then \(\Phi\) is log-convex in \(\lambda\). In particular, when \(K_\lambda(t)\) is independent of \(\lambda\),
\[
 \partial_\lambda^2\log\Phi(\lambda)  =\operatorname{Var}\bigl(K_\lambda(X)\bigr),
\]
and \(\Phi\) is log-convex in \(\lambda\).
\end{lemma}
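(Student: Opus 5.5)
The plan is to differentiate the first-order identity of Proposition~\ref{lem:score-identity-final} once more, using Proposition~\ref{lem:induced-expectation-derivative} with the test function taken to be the kernel itself. In the one-parameter setting, Proposition~\ref{lem:score-identity-final} (with \(\theta\) identified with \(\lambda\), so that \(P_\theta\) becomes \(P_\lambda\)) gives
\[
        \partial_\lambda\log\Phi(\lambda)=\E\,K_\lambda(X)\eqqcolon R(\lambda),
        \qquad X\sim P_\lambda .
\]
I will then apply Proposition~\ref{lem:induced-expectation-derivative} with \(h_\lambda\coloneqq K_\lambda\). Its hypotheses are exactly the ones added in the statement: \(K_\lambda w_\lambda\) satisfies the regularity condition of Assumption~\ref{asm:regularity} in \(\lambda\), and \(K_\lambda(X)\) and \(\partial_\lambda K_\lambda(X)\) are integrable. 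The proposition then yields
\[
        R'(\lambda)=\E\,\partial_\lambda K_\lambda(X)+\Cov\bigl(K_\lambda(X),K_\lambda(X)\bigr)
        =\E\,\partial_\lambda K_\lambda(X)+\Var\bigl(K_\lambda(X)\bigr),
\]
which is the stated identity, since \(R'(\lambda)=\partial_\lambda^2\log\Phi(\lambda)\).

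The main point to check is that Proposition~\ref{lem:induced-expectation-derivative} really applies. That proposition is written for \(w_{\lambda,\theta}\) with \(\theta\) held fixed. Here the law itself moves with \(\lambda\), which is precisely the case the proposition covers: both the density \(w_\lambda/\Phi(\lambda)\) and the test function depend on \(\lambda\). So it suffices to suppress \(\theta\), that is, to take \(\Theta\) to be a singleton and absorb everything into \(w_\lambda\).

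I will also need \(\E\,K_\lambda(X)^2<\infty\) for the variance to be finite. This should follow from the integrability of \(h_\lambda(X)K_\lambda(X)\), which is implicit in the regularity of \(K_\lambda w_\lambda\): indeed \(\partial_\lambda(K_\lambda w_\lambda)=(\partial_\lambda K_\lambda+K_\lambda^2)\,w_\lambda\), and it is dominated by an integrable majorant. I will note this briefly rather than belabour it.

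The two consequences follow directly. If \(\partial_\lambda K_\lambda(t)\ge0\) for all \(t\), then both terms in the identity are nonnegative, so \(\partial_\lambda^2\log\Phi\ge0\) on the interval \(\Lambda\), and \(\log\Phi\) is convex. If \(K_\lambda\) does not depend on \(\lambda\), then \(\partial_\lambda K_\lambda\equiv0\). The identity then reduces to the variance alone, and log-convexity follows as a special case of the previous one.
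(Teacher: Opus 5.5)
Your proposal is correct and follows the paper's proof essentially verbatim: you write $\partial_\lambda\log\Phi=\E\,K_\lambda(X)$ via Proposition~\ref{lem:score-identity-final}, then apply Proposition~\ref{lem:induced-expectation-derivative} with $h_\lambda=K_\lambda$ so that the covariance term becomes $\Var(K_\lambda(X))$, and you read off both log-convexity consequences from the nonnegativity of the two terms. Your remark that $\E\,K_\lambda(X)^2<\infty$ follows from the majorant for $\partial_\lambda(K_\lambda w_\lambda)=(\partial_\lambda K_\lambda+K_\lambda^2)w_\lambda$, combined with integrability of $\partial_\lambda K_\lambda(X)$, is a small detail that the paper leaves implicit.
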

\begin{proof}
By Proposition~\ref{lem:score-identity-final}, \(\partial_\lambda\log\Phi(\lambda)=\E\,K_\lambda(X)\). Applying Proposition~\ref{lem:induced-expectation-derivative} with \(h_\lambda(t)=K_\lambda(t) \) for $t \in J$ produces
\[
\begin{aligned}
        \partial_\lambda^2\log\Phi(\lambda)
        &=
        \E\,\partial_\lambda K_\lambda(X)
        +
        \Cov(K_\lambda(X),K_\lambda(X))  \\
        &=
        \E\,\partial_\lambda K_\lambda(X)
        +
        \Var(K_\lambda(X)),
\end{aligned}
\]
which is the required identity. For the second assertion, if \(\partial_\lambda K_\lambda(t)\ge0\) pointwise, then the variance term and the expectation term are both nonnegative, so \(\partial_\lambda^2\log\Phi(\lambda)\ge0\). Finally, if \(K_\lambda\) is independent of \(\lambda\), the expectation term vanishes and the variance identity follows.
\end{proof}
The last assertion of the corollary highlights a common special case. A basic example is the Laplace-transform weight \(e^{-\lambda t}\). Any positive mixture of these weights is log-convex.
\begin{lemma}
\label{lem:positive-transform}
Let \(I\subseteq\mathbb R\) be an open interval and let \(\nu\) be a nonnegative measure on \(\mathbb R\). If
\[
        F(\lambda)=\int_{\mathbb R} e^{-\lambda u}\,\nu(\dx[u])
\]
is finite and positive for every \(\lambda\in I\), then \(F\) is log-convex on \(I\).
\end{lemma}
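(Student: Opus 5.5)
The plan is to deduce the claim from Lemma~\ref{lem:variance}, taking \(J=\mathbb R\) and replacing the counting/Lebesgue reference measure by \(\nu\) itself. Concretely, set \(w_\lambda(u)=e^{-\lambda u}\) and \(\mu=\nu\). The kernel is then
\[
K_\lambda(u)=\partial_\lambda\log e^{-\lambda u}=-u,
\]
which does not depend on \(\lambda\). The induced law is \(P_\lambda(\dx[u])=e^{-\lambda u}\nu(\dx[u])/F(\lambda)\). This is a genuine probability measure because \(0<F(\lambda)<\infty\) on \(I\). The identity of Lemma~\ref{lem:variance} then gives \(\partial_\lambda^2\log F(\lambda)=\Var(X)\ge0\) for \(X\sim P_\lambda\), which is log-convexity.

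The one point needing care is that the framework fixed \(\mu\) to be Lebesgue or counting measure and required \(w>0\). Both the score identity and Proposition~\ref{lem:induced-expectation-derivative} use only that \(\mu\) is a \(\sigma\)-finite measure and that differentiation under the integral is allowed. So I will observe that these proofs transfer verbatim to a general \(\nu\). The requirement \(w>0\) holds trivially, since the exponential is positive.

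The main obstacle is verifying the domination hypotheses (Assumption~\ref{asm:regularity} for \(w_\lambda\) and for \(K_\lambda w_\lambda\)) on a compact interval \(B=[a,b]\subset I\). I will first pick \(a'<a\) and \(b'>b\) in \(I\), which is possible because \(I\) is open. For \(\lambda\in B\) and \(k\in\{1,2\}\), I will then use the bound
\[
|u|^k e^{-\lambda u}\le C_k\bigl(e^{-a'u}+e^{-b'u}\bigr).
\]
To see this, for \(u\ge0\) write \(|u|^ke^{-\lambda u}\le |u|^k e^{-(a-a')u}e^{-a'u}\), and note that \(|u|^ke^{-(a-a')u}\) is bounded on \(u\ge0\). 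The case \(u<0\) is handled symmetrically using \(b'\). The right-hand side is \(\nu\)-integrable because \(F(a')\) and \(F(b')\) are finite. This supplies the majorants and also the integrability of \(K_\lambda(X)\) and \(K_\lambda(X)^2\), so the variance is finite.

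As a fallback that avoids the framework entirely, I could apply Hölder's inequality with exponents \(1/t\) and \(1/(1-t)\) to \(e^{-(t\lambda_1+(1-t)\lambda_2)u}=(e^{-\lambda_1u})^t(e^{-\lambda_2u})^{1-t}\). This gives \(F(t\lambda_1+(1-t)\lambda_2)\le F(\lambda_1)^tF(\lambda_2)^{1-t}\) directly, with no differentiability needed. I would, however, present the variance argument as the main line, since it fits the paper's method.
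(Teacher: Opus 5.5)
Your proposal is correct and is essentially the paper's proof: both apply Lemma~\ref{lem:variance} to the tilted law \(e^{-\lambda u}\nu(du)/F(\lambda)\) with the \(\lambda\)-independent kernel \(K_\lambda(u)=-u\), which gives \((\log F)''(\lambda)=\Var(X)\ge0\). Your explicit majorant \(C_k(e^{-a'u}+e^{-b'u})\) does the job of the paper's remark that the moment generating function \(F(\lambda-s)/F(\lambda)\) is finite for small \(|s|\). It also checks the domination hypotheses of Lemma~\ref{lem:variance} more completely than the paper, which only records that the second moment is finite.
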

\begin{proof}
Fix \(\lambda\in I\). The induced law is $$P_\lambda(\dx[u])=F(\lambda)^{-1}e^{-\lambda u}\nu(\dx[u]), \quad u \in I.$$

Since \(F\) is finite on the open interval \(I\), for each \(\lambda\in I\) the induced law has moment generating function
\[
        M_X(s)= \frac{1}{F(\lambda)}
 \int_{\mathbb R} e^{s u}e^{-\lambda u}\nu(\dx[u])=\frac{F(\lambda-s)}{F(\lambda)}
\]
finite for all sufficiently small \(s\). Hence, $X$ has finite second moment.

Now, the associated kernel is \(K_\lambda(t)=\partial_\lambda\log e^{-\lambda t}=-t\), independent of \(\lambda\). Lemma~\ref{lem:variance} then yields \(\partial_\lambda^2\log F(\lambda)=\Var(-X)=\Var(X)\ge0\) for $X \sim P_\lambda$. Thus,  \(F\) is log-convex.
\end{proof}
\begin{example}[Bohr--Mollerup theorem - log-convexity of the gamma function]
\label{ex:gamma-logconvex}
For \(\lambda>0\), the Eulerian integral \(\Gamma(\lambda)=\int_0^\infty t^{\lambda-1}e^{-t}\,\dx[t]\) has weight function \(w_\lambda(t)=t^{\lambda-1}e^{-t}\), whose \(\lambda\)-kernel is \(K_\lambda(t)=\partial_\lambda\log w_\lambda(t)=\log t\), independent of \(\lambda\).  With \(X\) distributed according to the gamma law \(w_\lambda(t)/\Gamma(\lambda)\), Lemma~\ref{lem:variance} gives \((\log\Gamma)''(\lambda)=\Var(\log X)=\psi'(\lambda)\ge0\) and the ensuing log-convexity of the Gamma function.
\end{example}

\subsection{Truncated series and integrals}
\label{sec:truncation-final}
Let \(w_\lambda:J\to(0,\infty)\) and define the lower and upper truncations
\begin{equation}
\label{def.truncated_laws}
        \Phi^-(\lambda,x)
        =
        \int_{J\cap(-\infty,x]} w_\lambda(t)\,\mu(\dx[t]),
        \qquad
        \Phi^+(\lambda,x)
        =
        \int_{J\cap[x,\infty)} w_\lambda(t)\,\mu(\dx[t]),
\end{equation}
assuming both quantities are finite and positive.  The truncation point \(x\) is now the shifted parameter, and the weight function itself does not depend on \(x\) and so is the corresponding \(\lambda\)-kernel.

Define the associated probability laws \(P^-_{x}\), \(P^+_{x}\) by
\begin{equation}
P^-_{x}(\dx[t]) =
\frac{\1{\{t\le x\}}w_{\lambda}(t)}{\Phi^-(\lambda,x)}\,\dx[\mu(t)],
\qquad
P^+_{x}(\dx[t])  =
\frac{\1{\{t\ge x\}}w_{\lambda}(t)}{\Phi^+(\lambda,x)}\,\dx[\mu(t)],
\end{equation}
where $\1\{\cdots\}$ denotes the identity function. In this setting, \(\lambda\) is fixed, so the ordering below is in the truncation parameter \(x\).
\begin{lemma}
\label{lem:trunc-lr-final}
For \(x_1<x_2\),
\[
        P_{x_1}^-\st P_{x_2}^-,
        \, \text{ and }\,
        P_{x_1}^+\st P_{x_2}^+ .
\]
\end{lemma}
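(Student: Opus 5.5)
We will obtain both orderings from the likelihood-ratio route of Lemma~\ref{lem:weight-order}, taking the truncation point as the shifted parameter with \(\theta_1=x_2\) and \(\theta_2=x_1\). The truncated weights are \(w^-_{x}(t)=\1\{t\le x\}w_\lambda(t)\) and \(w^+_{x}(t)=\1\{t\ge x\}w_\lambda(t)\). These vanish off a half-line, so they are not strictly positive on \(J\). We therefore work directly with the likelihood ratio of Definition~\ref{def:orders} and its conventions \(0/0=0\), \(c/0=+\infty\), rather than with \(\ell\) as defined in \eqref{def.weight_ratio}. Our plan is to show that \(\dx[P^\pm_{x_2}]/\dx[P^\pm_{x_1}]\) is nondecreasing on the ordered union of the supports. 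This gives \(P^\pm_{x_1}\lr P^\pm_{x_2}\), and Lemma~\ref{lem:lr-implies-st} then yields the usual stochastic order.

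For the lower truncation, the common factor \(w_\lambda(t)>0\) cancels wherever both densities are positive, so
\[
        \frac{\dx[P^-_{x_2}]}{\dx[P^-_{x_1}]}(t)=\frac{\Phi^-(\lambda,x_1)}{\Phi^-(\lambda,x_2)}\,\frac{\1\{t\le x_2\}}{\1\{t\le x_1\}}.
\]
This takes the positive constant \(c_-=\Phi^-(\lambda,x_1)/\Phi^-(\lambda,x_2)\) for \(t\le x_1\). It takes \(c_-/0=+\infty\) for \(x_1<t\le x_2\). Beyond \(x_2\) both measures vanish, so the point lies outside the union of supports (or the ratio is \(0/0\), which is irrelevant there). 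On \(J\cap(-\infty,x_2]\) the ratio is therefore a step function going from a constant up to \(+\infty\), hence nondecreasing.

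For the upper truncation, the same cancellation gives the ratio
\[
        \frac{\Phi^+(\lambda,x_1)}{\Phi^+(\lambda,x_2)}\,\frac{\1\{t\ge x_2\}}{\1\{t\ge x_1\}}.
\]
On the union of supports \(J\cap[x_1,\infty)\), this equals \(0\) for \(x_1\le t<x_2\), since the numerator density vanishes there while the denominator does not. It equals the positive constant \(\Phi^+(\lambda,x_1)/\Phi^+(\lambda,x_2)\) for \(t\ge x_2\). The ratio is again nondecreasing.

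As a fallback, and as a sanity check, we can verify the survival-function definition directly. For \(P^+\), and for \(t\ge x_2\), we have \(P^+_{x}([t,\infty))=\Phi^+(\lambda,t)/\Phi^+(\lambda,x)\). This is larger for \(x=x_2\) because \(\Phi^+(\lambda,x_2)\le\Phi^+(\lambda,x_1)\), while for \(t\le x_2\) the \(x_2\)-law assigns mass \(1\). The lower case is analogous, using \(P^-_x([t,\infty))=1-\Phi^-(\lambda,t^-)/\Phi^-(\lambda,x)\).

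The main obstacle is bookkeeping rather than substance. The general framework assumes strictly positive weights on \(J\), which truncation breaks, so we must invoke the extended-ratio conventions of Definition~\ref{def:orders} and be careful about the region where one density is zero. For counting measure we must also handle the point masses at the endpoints \(x_1\) and \(x_2\); the closed intervals in \eqref{def.truncated_laws} keep these consistent.
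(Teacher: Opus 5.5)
Your proposal is correct and follows essentially the same route as the paper: you compute the extended likelihood ratio of $P^\pm_{x_2}$ with respect to $P^\pm_{x_1}$ on the ordered union of the supports, with the conventions of Definition~\ref{def:orders}. You observe that it is a nondecreasing step function (a positive constant and then $+\infty$ for the lower truncation, $0$ and then a positive constant for the upper), which gives the likelihood-ratio order and hence the usual stochastic order by Lemma~\ref{lem:lr-implies-st}. Your explicit treatment of the upper truncation and the direct survival-function check go beyond the paper's ``similar argument'', but they do not change the approach.
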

\begin{proof}
For the lower truncation, the likelihood ratio \(\dx[P^-_{x_2}]/\dx[P^-_{x_1}]\), extended to the ordered union of supports, is constant on \(J\cap(-\infty,x_1]\) and \(+\infty\) on \(J\cap(x_1,x_2]\).  Hence,  \(P^-_{x_1}\lr P^-_{x_2}\) which implies \(P^-_{x_1}\st P^-_{x_2}\) by Lemma~\ref{lem:lr-implies-st}.  The second assertion follows by a similar argument.
\end{proof}

\begin{cor}
\label{cor:truncation-sign-rule}
If \(K_\lambda\) is monotone with sign \(\varepsilon_K\), then for \(x_1<x_2\) the quotients
\[
        \lambda\longmapsto
        \frac{\Phi^-(\lambda,x_2)}{\Phi^-(\lambda,x_1)},
        \qquad
        \lambda\longmapsto
        \frac{\Phi^+(\lambda,x_2)}{\Phi^+(\lambda,x_1)}
\]
are monotone in \(\lambda\) with sign \(\varepsilon_K\).  Consequently, for \(\xi_1\le\xi_2\), each truncated function \(\Phi^\pm\) satisfies
\begin{equation}
\label{eq.trunc-determinant}
        \varepsilon_K\,
        \bigl\{
        \Phi^\pm(\xi_2,x_2)\Phi^\pm(\xi_1,x_1)
        -\Phi^\pm(\xi_1,x_2)\Phi^\pm(\xi_2,x_1)\bigr\}\ge0 ,
\end{equation}
so \(\Phi^-\) and \(\Phi^+\) are log-supermodular in \((\lambda,x)\) when \(K_\lambda\) is nondecreasing, and log-submodular when \(K_\lambda\) is nonincreasing.
\end{cor}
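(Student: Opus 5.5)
The plan is to cast the truncations as instances of the one-kernel framework and apply Theorem~\ref{thm:lr-final}. For \(i=1,2\) I take \(\theta_i=x_i\) and the truncated weight \(w^-_{\lambda,x}(t)=\1\{t\le x\}\,w_\lambda(t)\). Then \(\Phi^-(\lambda,x)\) is exactly the function~\eqref{def.phi} built from this weight, and the induced law is \(P^-_x\).

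The indicator is a problem, because it makes the weight vanish and so it is not strictly positive on \(J\). I would handle this by rerunning the argument of Proposition~\ref{lem:score-identity-final} on each support \(J\cap(-\infty,x_i]\) separately. Since the indicator does not depend on \(\lambda\), differentiating under the integral gives
\[
\partial_\lambda\log\Phi^-(\lambda,x_i)=\E\,K_\lambda(X_i), \qquad X_i\sim P^-_{x_i}.
\]
The kernel \(K_\lambda=\partial_\lambda\log w_\lambda\) is the same on both supports, so it is independent of the shifted parameter \(x\). This is the hypothesis Corollary~\ref{thm:expectation-final} needs, and it yields
\[
\partial_\lambda\log\frac{\Phi^-(\lambda,x_2)}{\Phi^-(\lambda,x_1)}=\E\,K_\lambda(X_2)-\E\,K_\lambda(X_1).
\]

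Next, Lemma~\ref{lem:trunc-lr-final} gives \(P^-_{x_1}\st P^-_{x_2}\) for every \(\lambda\); the ordering does not depend on \(\lambda\), since the likelihood ratio is constant-then-infinite for any \(w_\lambda\). If \(K_\lambda\) is nondecreasing, the usual stochastic order gives \(\E K_\lambda(X_1)\le\E K_\lambda(X_2)\), and the inequality reverses if \(K_\lambda\) is nonincreasing. Lemma~\ref{lem.sign.logderiv} then shows that the quotient is monotone in \(\lambda\) with sign \(\varepsilon_K\). The case of \(\Phi^+\) is the same word for word, using the second half of Lemma~\ref{lem:trunc-lr-final}.

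For the determinant~\eqref{eq.trunc-determinant}, I apply Lemma~\ref{lem.sumproduct} in the variable \(\lambda\) with \(f=\Phi^\pm(\cdot,x_2)\), \(g=\Phi^\pm(\cdot,x_1)>0\), \(t_1=\xi_1\) and \(t_2=\xi_2\). Since \(f/g\) has monotonicity sign \(\varepsilon_K\), this gives \(\varepsilon_K\{f(\xi_2)g(\xi_1)-f(\xi_1)g(\xi_2)\}\ge0\), which is~\eqref{eq.trunc-determinant}. Log-super- or submodularity in \((\lambda,x)\) is the same statement read as an inequality on rectangles. I avoid the mixed-derivative route of Corollary~\ref{cor:determinant} here, because \(x\mapsto\Phi^\pm\) need not be differentiable in the counting-measure case.

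The main obstacle is justifying the expectation identity and the stochastic-order step when the weight vanishes off the truncated support and \(K_\lambda\) may be unbounded. I would assume Assumption~\ref{asm:regularity} for \(w_\lambda\) on \(J\). This majorant restricts to each truncated domain, so \(\E|K_\lambda(X_i)|<\infty\) and the comparison of expectations under \(\st\) is legitimate.
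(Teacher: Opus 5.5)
Your proposal is correct and follows the paper's proof. Lemma~\ref{lem:trunc-lr-final} supplies the stochastic order. The quotient monotonicity then comes from Theorem~\ref{thm:lr-final}, which you unpack inline through the expectation identity and Lemma~\ref{lem.sign.logderiv}, and Lemma~\ref{lem.sumproduct} gives the determinant. Two details the paper leaves implicit are covered in your version: that the truncated weight vanishes off its support, and that $K_\lambda$ is integrable under the truncated laws.
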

\begin{proof}
By Lemma~\ref{lem:trunc-lr-final}, \(P_{x_1}^\pm\st P_{x_2}^\pm\).  For the first assertion, apply Theorem~\ref{thm:lr-final} with \(\theta_1=x_2\) and \(\theta_2=x_1\) to get the stated quotient monotonicity.  For the second part, apply Lemma~\ref{lem.sumproduct} to \(\lambda\longmapsto\Phi^\pm(\lambda,x_2)/\Phi^\pm(\lambda,x_1)\)  to obtain \eqref{eq.trunc-determinant}.
\end{proof}

\begin{example}[The incomplete gamma function]
\label{ex:incomplete-gamma}
The lower incomplete gamma function \(\gamma(\lambda,x)=\int_0^x t^{\lambda-1}e^{-t}\,\dx[t]\) is the lower truncation at \(x\) of the gamma weight \(w_\lambda(t)=t^{\lambda-1}e^{-t}\), with \(\lambda\)-kernel \(K_\lambda(t)=\log t\), increasing, so \(\varepsilon_K=1\). By Corollary~\ref{cor:truncation-sign-rule}, for \(0<x_1<x_2\) the quotient \(\lambda\mapsto\gamma(\lambda,x_2)/\gamma(\lambda,x_1)\) is increasing on \((0,\infty)\), and \(\gamma\) is log-supermodular in \((\lambda,x)\).
\end{example}

When the monotonicity variable is the discrete truncation index, there is no $C^1$ log-derivative in that direction, so the kernel $K_\lambda$ and its additive mean identity $\E\,K_\lambda(t)=\partial_\lambda\log\Phi(\lambda, \theta)$ are unavailable. In this case, we substitute them by a multiplicative counterpart as follows: take \(J=\mathbb N_0\), or more generally an integer tail, and write
\[
        \Phi^+(\lambda,m)=\sum_{n\ge m}w_\lambda(n).
\]
Let \(P^+_m\) be the corresponding upper-truncation law and let $X\sim P^+_m$.  The forward ratio
\begin{equation*}
{\cal K}(n)=\frac{w_\lambda(n+1)}{w_\lambda(n)}
\end{equation*}
serves as a discrete kernel, with the `multiplicative' mean identity
\begin{equation}
        \label{def.expectation.identity}
        \E\,{\cal K}(X)=\frac{\Phi^+(\lambda,m+1)}{\Phi^+(\lambda,m)},
\end{equation}
in which the forward ratio of \(\Phi^+\) is multiplicative analogue of \(\partial_\lambda\log\Phi\).
\begin{lemma}[Discrete Sign Rule for tail sums]
\label{lem:discrete-tail-sign-rule}
If the discrete kernel \({\cal K}\) is monotone,  with sign \(\varepsilon_{\cal K}\), then
\[
        -\varepsilon_{\cal K}\,
        \bigl(\Phi^+(\lambda,m+1)^2-\Phi^+(\lambda,m)\,\Phi^+(\lambda,m+2)\bigr)
        \ge0,
        \qquad m\in\mathbb N_0 .
\]
\end{lemma}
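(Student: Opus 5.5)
Since \(\Phi^+(\lambda,m)>0\) for every \(m\), the stated inequality is equivalent to a comparison of consecutive forward ratios of \(\Phi^+\). Dividing \(\Phi^+(\lambda,m+1)^2-\Phi^+(\lambda,m)\Phi^+(\lambda,m+2)\) by the positive product \(\Phi^+(\lambda,m)\Phi^+(\lambda,m+1)\) turns it into
\[
\frac{\Phi^+(\lambda,m+1)}{\Phi^+(\lambda,m)}-\frac{\Phi^+(\lambda,m+2)}{\Phi^+(\lambda,m+1)} .
\]
By the multiplicative mean identity \eqref{def.expectation.identity}, applied at the truncation points \(m\) and \(m+1\), this difference equals \(\E\,{\cal K}(X_m)-\E\,{\cal K}(X_{m+1})\), where \(X_m\sim P^+_m\) and \(X_{m+1}\sim P^+_{m+1}\). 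The claim therefore reduces to
\[
\varepsilon_{\cal K}\bigl(\E\,{\cal K}(X_{m+1})-\E\,{\cal K}(X_m)\bigr)\ge0 .
\]

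The steps, in order, are as follows.

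First, I verify \eqref{def.expectation.identity}. Write
\[
\E\,{\cal K}(X)=\sum_{n\ge m}\frac{w_\lambda(n+1)}{w_\lambda(n)}\cdot\frac{w_\lambda(n)}{\Phi^+(\lambda,m)}=\frac{\sum_{n\ge m+1}w_\lambda(n)}{\Phi^+(\lambda,m)},
\]
which is a telescoping reindexing; finiteness follows from finiteness of \(\Phi^+\).

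Second, I invoke Lemma~\ref{lem:trunc-lr-final} with \(x_1=m<x_2=m+1\), which gives \(P^+_m\st P^+_{m+1}\).

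Third, I use the expectation characterisation of the usual stochastic order in Definition~\ref{def:orders}. If \({\cal K}\) is nondecreasing, then \(\E\,{\cal K}(X_m)\le\E\,{\cal K}(X_{m+1})\), so the ratio difference is \(\le0\). This means \(\Phi^+(\lambda,m+1)^2-\Phi^+(\lambda,m)\Phi^+(\lambda,m+2)\le0\), matching the sign \(-\varepsilon_{\cal K}=-1\). The nonincreasing case reverses every inequality.

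Finally, I multiply back by \(\Phi^+(\lambda,m)\Phi^+(\lambda,m+1)>0\), using Lemma~\ref{lem.sumproduct} or a direct computation, to recover the determinantal form.

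The only delicate point is integrability. The order characterisation needs both expectations of \({\cal K}\) to exist, and this is exactly what the first step guarantees, since each expectation equals a ratio of finite tail sums. For nonincreasing \({\cal K}\), which is positive and bounded above by \({\cal K}(m)\), integrability is automatic. For nondecreasing \({\cal K}\), the identity itself shows the mean is finite. So I expect no real obstacle beyond stating this explicitly.
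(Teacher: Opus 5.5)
Your proposal is correct and follows essentially the same route as the paper. The paper also invokes Lemma~\ref{lem:trunc-lr-final} to order $P^+_m$ below $P^+_{m+1}$ in the usual stochastic order, compares the means of the monotone discrete kernel under the two laws, and reads these means as consecutive tail ratios via the multiplicative mean identity \eqref{def.expectation.identity}; your explicit checks of that identity and of integrability fill in details the paper leaves implicit.
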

\begin{proof}
By Lemma~\ref{lem:trunc-lr-final} the truncation laws satisfy \(P^+_m\st P^+_{m+1}\). Let $X\sim P^+_m$ and $X'\sim P^+_{m+1}$.  If \({\cal K}\) is nonincreasing, comparing its means under the two ordered laws, through the mean identity \eqref{def.expectation.identity} gives
\[
        \frac{\Phi^+(\lambda,m+2)}{\Phi^+(\lambda,m+1)}
        =
        \E\,{\cal K}(X')
        \le
        \E\,{\cal K}(X)
        =
        \frac{\Phi^+(\lambda,m+1)}{\Phi^+(\lambda,m)},
\]
which is \(\Phi^+(\lambda,m+1)^2\ge\Phi^+(\lambda,m)\Phi^+(\lambda,m+2)\), the case \(\varepsilon_{\cal K}=-1\).  The nondecreasing case is identical with the inequality reversed.
\end{proof}

\begin{rem}
The log-concave case is the standard \(PF_2\) closure property for upper tails (equivalently, for log-concave sequences with no internal zeros).  The proof above establishes the same fact in the present language: the tail ratio is the expectation of the ratio-kernel \({\cal K}\), and the upper truncation laws are stochastically ordered.  The log-convex case follows from the same argument with the monotonicity reversed.
\end{rem}

\begin{rem}
\label{rem.covariance.upper}
The inequality of Lemma~\ref{lem:discrete-tail-sign-rule} is also a single-law covariance. Let $X\sim P^+_m$. Keeping the upper-truncation law \(P^+_m\) and the discrete kernel \({\cal K}(n)=w_\lambda(n+1)/w_\lambda(n)\), the increment of the tail ratio is a covariance under \(P^+_m\): since deleting the bottom atom gives \(P^+_{m+1}\) as \(P^+_m\) conditioned on \(\{X>m\}\),
\[
        \Phi^+(\lambda,m+1)^2-\Phi^+(\lambda,m)\,\Phi^+(\lambda,m+2)
        =
        -\,\frac{\Phi^+(\lambda,m)\,\Phi^+(\lambda,m+1)}{P^+_m(X>m)}\,
        \Cov\!\bigl({\cal K}(X),\,\1{\{X>m\}}\bigr).
\]
When \({\cal K}\) is nonincreasing the indicator \(\1{\{X>m\}}\) is nondecreasing, so the covariance is nonpositive by the Chebyshev integral inequality (Lemma~\ref{lem.chebyshev}).  The determinant is thus a Chebyshev covariance of the kernel with the truncation indicator, an alternative to the upper-tail Sign Rule that invokes no stochastic order.
\end{rem}

\section{Applications}
\label{sec:sample-classical}

\subsection{Quotients of generalised hypergeometric functions}
\label{ex.gen-hypergeometric}

Let
\[
\HypF pq\!\left(
\genfrac{}{}{0pt}{} {a_1,\ldots,a_p}{b_1,\ldots,b_q}
\middle| x\right)
\coloneqq
\sum_{n=0}^\infty
\frac{\prod_{i=1}^p(a_i)_n}
     {\prod_{j=1}^q(b_j)_n}
\frac{x^n}{n!},
\]
with all the parameters positive and \(x>0\) in the convergence domain.

Single out two of the parameters, a monotonicity parameter \(\lambda\) and a distinct shifted parameter \(\theta\), each located in a fixed upper or lower position, with all the remaining parameters held fixed.  Let \(Q_{\lambda;\theta_1,\theta_2}(x)\) be the quotient of two such functions that agree everywhere except in the \(\theta\) position, which holds the value \(\theta_1\) in the numerator and \(\theta_2\) in the denominator.  Normalising the terms gives, in either function, the induced law on \(\Nzero\)
\[
        P(n)\propto w(n)\coloneqq\frac{\prod_{i=1}^p(a_i)_n}{\prod_{j=1}^q(b_j)_n}\,\frac{x^n}{n!},
\]
and the quotient is compared through the ratio of these two laws.  The kernel of \(\lambda\) is \(n\mapsto\psi(\lambda+n)-\psi(\lambda)\), increasing in \(n\), when \(\lambda\) is upper, and its negative, decreasing, when \(\lambda\) is lower. Since the two functions differ only in the \(\theta\) position, the ratio of their \(n\)th terms reduces to \(n\mapsto(\theta_1)_n/(\theta_2)_n\) when \(\theta\) is upper and \(n\mapsto(\theta_2)_n/(\theta_1)_n\) when \(\theta\) is lower, increasing in \(n\) exactly when \(\theta_1>\theta_2\) and \(\theta_2>\theta_1\) respectively.  The term \(w(n)\) is smooth in each parameter, and on every compact parameter set, the relevant first and mixed derivatives are dominated by a summable majorant inside the disc of convergence.  Hence the regularity assumptions hold, and the Sign Rule (Theorem~\ref{thm:sign-rule}) delivers
\[
        \sgn(\theta_1-\theta_2)\,
        \partial_\lambda Q_{\lambda;\theta_1,\theta_2}(x)\ge0
        \quad\text{when \(\lambda,\theta\) are of the same type,}
\]
with the inequality reversed when they are of opposite type. The same upper/lower classification applies to the \(\theta\)-kernel.  By the two-kernel Sign Rule (Theorem~\ref{thm:mixed}) the mixed derivative is the covariance of the two parameter kernels,
\[
        \varepsilon_K\varepsilon_L\,\partial_\lambda\partial_\theta\log\HypF pq\ge0,
\]
nonnegative when \(\lambda,\theta\) are of the same type and nonpositive when of opposite type.  Its `finite-difference' form is the determinant of Corollary~\ref{cor:determinant}: for increments \(\alpha,\beta\ge0\), in the upper--upper Gaussian case, for \(0<x<1\), \(a,b,c>0\),
\[
        \HypF21(a+\alpha,b+\beta;c;x)\HypF21(a,b;c;x)
        -
        \HypF21(a,b+\beta;c;x)\HypF21(a+\alpha,b;c;x)
        \ge0,
\]
and, in the upper--lower Kummer case, for \(x>0\), \(a,c>0\),
\[
        \HypF11(a+\alpha;c+\beta;x)\HypF11(a;c;x)
        -
        \HypF11(a;c+\beta;x)\HypF11(a+\alpha;c;x)
        \le0 ,
\]
the sign reversed from the upper--upper case because \(\lambda\) and \(\theta\) are of opposite type.

\noindent When the monotonicity variable and the shifted parameter coincide, the variance
rule applies to lower parameters but not to upper ones.  For a lower parameter \(b_j=\lambda\), the kernel \(K_\lambda(n)=-\{\psi(\lambda+n)-\psi(\lambda)\}\) has parameter derivative \(\partial_\lambda K_\lambda(n)=\psi'(\lambda)-\psi'(\lambda+n)\ge0\), so Lemma~\ref{lem:variance} gives \(\partial_\lambda^2\log\HypF pq\ge0\). Hence, the function is log-convex in each lower parameter, and the quotient \(\HypF pq(\dots;\lambda+\delta;\dots)/\HypF pq(\dots;\lambda;\dots)\) is increasing in \(\lambda\). This result covers precisely the third quotient from the introductory example.

For an upper parameter, the variance rule is inconclusive, since the variance term and the expectation of \(\partial_\lambda K_\lambda\) have opposite signs.
\subsection{The generalised Stieltjes transform}
\label{ex:karp-sitnik}

In~\cite{KarpSitnik2009ratios}, a shifted generalised hypergeometric function is represented by the generalised Stieltjes transform
\[
        \HypF{q+1}{q}(\sigma,(a_q) + \delta;(b_q)+ \delta;-x)
        =A_\delta\int_0^1 s^{a_1+\delta-1}\,g((a_q)+\delta;(b_q)+\delta;s)\,(1+sx)^{-\sigma}\,\dx[s] ,
\]
valid for \(b_k>a_k>0\), where \(A_\delta>0\) and \(\delta\ge0\).  Thus
\[
        w_{x,\delta}(s)=A_\delta s^{a_1+\delta-1}(1+sx)^{-\sigma}\,g((a_q)+\delta;(b_q)+\delta;s),
        \qquad 0<s<1.
\]
The \(x\)-kernel
\[
        K_x(s)=\partial_x\log(1+sx)^{-\sigma}=-\frac{\sigma s}{1+sx},
\]
is independent of \(\delta\) and, for \(x>-1\) so that \(1+sx>0\) on \((0,1)\), has sign \(\varepsilon_K=-\sgn\sigma\) as a function of \(s\).  Since \(g\) is invariant under simultaneous shifts of all parameters, \(g((a_q)+\delta;(b_q)+\delta;s)=g((a_q);(b_q);s)\) by Lemma~1 of~\cite{KarpSitnik2009ratios}, the weight ratio is \(\ell_{\delta,0}(s)=(A_\delta/A_0)s^\delta\). Hence, \(\varepsilon_\ell=1\) for \(\delta>0\).  The weight \(w_{x,\delta}\) is smooth in \(x\), and dominated convergence applies locally on compact subintervals of \((-1,\infty)\) by the positive Stieltjes representation, so Assumption~\ref{asm:regularity} holds.  For \(x>-1\) and \(\delta>0\), the Sign Rule yields
\[
        -\sgn(\sigma)\,\partial_x
        \frac{\HypF{q+1}{q}(\sigma,(a_q)+\delta;(b_q)+\delta;-x)}
             {\HypF{q+1}{q}(\sigma,(a_q);(b_q);-x)}
        \ge0,
\]
recovering Theorem~1 in~\cite{KarpSitnik2009ratios}.  The \(x\)-kernel has derivative \(\partial_x K_x(s)=\sigma s^2/(1+sx)^2\), nonnegative for \(\sigma>0\), so Lemma~\ref{lem:variance} yields \(\partial_x^2\log\,\HypF{q+1}{q}(\sigma,(a_q);(b_q);-x)\ge0\): the function is log-convex in \(x\) for \(\sigma>0\), \(x>-1\).

\begin{cor}
\label{cor:stieltjes-meijer-relaxation}
Let \(\mathbf a=(a_1,\ldots,a_q)\) and \(\mathbf b=(b_1,\ldots,b_q)\) be two positive sequences, and write \(\Gamma(\mathbf a)=\prod_i\Gamma(a_i)\).  If the Meijer-\(G\) density
\[
        \rho_{\mathbf a,\mathbf b}(s)
        =
        G^{q,0}_{q,q}\!\left(
        s\,\middle|\begin{matrix}\mathbf b\\ \mathbf a\end{matrix}
        \right)
\]
is nonnegative on \((0,1)\), then, for \(\delta>0\), \(x>-1\), and real \(\sigma\),
\[
        -\sgn(\sigma)\,\partial_x
        \frac{\HypF{q+1}{q}(\sigma,\mathbf a+\delta;\mathbf b+\delta;-x)}
             {\HypF{q+1}{q}(\sigma,\mathbf a;\mathbf b;-x)}
        \ge0 .
\]
\end{cor}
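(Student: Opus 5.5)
The plan is to repeat the Stieltjes-transform argument above, replacing the Karp--Sitnik hypothesis \(b_k>a_k>0\) by nonnegativity of the Meijer-\(G\) density. The first step is to establish the representation
\[
        \HypF{q+1}{q}(\sigma,\mathbf a;\mathbf b;-x)
        =\frac{\Gamma(\mathbf b)}{\Gamma(\mathbf a)}\int_0^1 \rho_{\mathbf a,\mathbf b}(s)\,(1+sx)^{-\sigma}\,\dx[s],
\]
together with its shifted analogue with \(\rho_{\mathbf a+\delta,\mathbf b+\delta}\). This comes from expanding \((1+sx)^{-\sigma}=\sum_n(\sigma)_n(-sx)^n/n!\), first for \(|x|<1\), and integrating term by term. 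The needed input is the Mellin moment formula
\[
        \int_0^1 s^{n}\rho_{\mathbf a,\mathbf b}(s)\,\frac{\dx[s]}{s}=\frac{\Gamma(\mathbf a+n)}{\Gamma(\mathbf b+n)},
\]
with the function \(G^{q,0}_{q,q}\) supported in \((0,1]\). Afterwards, the identity is extended to \(x>-1\) by analytic continuation, since the right-hand side is analytic there.

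The second step is the shift identity \(\rho_{\mathbf a+\delta,\mathbf b+\delta}(s)=s^\delta\rho_{\mathbf a,\mathbf b}(s)\). This is immediate from the Mellin–Barnes definition, because shifting every parameter by \(\delta\) translates the contour variable. It plays the role of Lemma~1 of \cite{KarpSitnik2009ratios}.

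With this in hand, the weights are \(w_{x,\delta}(s)=C_\delta\, s^{\delta-1}\rho_{\mathbf a,\mathbf b}(s)(1+sx)^{-\sigma}\) and \(w_{x,0}(s)=C_0\, s^{-1}\rho_{\mathbf a,\mathbf b}(s)(1+sx)^{-\sigma}\), with positive constants \(C_\delta, C_0\).

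The third step applies the Sign Rule (Theorem~\ref{thm:sign-rule}). The \(x\)-kernel \(K_x(s)=-\sigma s/(1+sx)\) does not depend on \(\delta\) and is monotone in \(s\) with \(\varepsilon_K=-\sgn\sigma\) when \(x>-1\). The weight ratio is \(\ell(s)=(C_\delta/C_0)s^\delta\), which is increasing, so \(\varepsilon_\ell=1\). Theorem~\ref{thm:sign-rule} then yields \(-\sgn(\sigma)\,\partial_x\log Q\ge0\) for the quotient \(Q\), and Lemma~\ref{lem.sign.logderiv} converts this into the stated inequality for \(\partial_x Q\). When \(\sigma=0\) the quotient is constant and the claim is trivial.

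There is one technical point. The weights are only nonnegative, not strictly positive, and may vanish on a null set. I would handle this by restricting \(J\) to the set where \(\rho>0\), which does not change the integrals or the monotonicity arguments. Alternatively, one can note that Lemma~\ref{lem:weight-order} only needs monotonicity of \(\ell\) on the common support, and here both laws share that support.

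Regularity (Assumption~\ref{asm:regularity}) follows on compact \(B\subset(-1,\infty)\) from the bound
\[
        |\partial_x w|\le |\sigma|\,(1+s\inf B)^{-\sigma-1}\, s^{\delta}\rho/s\cdot\text{const},
\]
which is integrable because \(\rho\) has finite Mellin moments. Positivity of \(\Phi\) is needed so that the laws are well defined. It follows from \(\rho\ge0\) and \(\rho\not\equiv0\), which holds since its zeroth Mellin moment equals \(\Gamma(\mathbf a)/\Gamma(\mathbf b)>0\).

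The main obstacle is the first step: justifying the Meijer-\(G\) moment formula and the support in \((0,1]\) for general positive \(\mathbf a,\mathbf b\), including integrability of \(\rho\) near \(s=1\) and \(s=0\). For this I would cite the standard Mellin transform of \(G^{q,0}_{q,q}\) (as in Karp--Sitnik), valid when \(\min a_i>0\).
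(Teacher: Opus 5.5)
Your route is the paper's: the Karp--Prilepkina representation with respect to $\mathrm{d}s/s$, the shift relation $\rho_{\mathbf a+\delta,\mathbf b+\delta}(s)=s^\delta\rho_{\mathbf a,\mathbf b}(s)$ giving $\ell=C_\delta s^\delta$ and $\varepsilon_\ell=1$, the kernel $-\sigma s/(1+sx)$ with $\varepsilon_K=-\sgn\sigma$, and Theorem~\ref{thm:sign-rule}. Your remarks on analytic continuation to $x>-1$, the null zero set of $\rho$, the regularity majorant and the case $\sigma=0$ fill in details the paper leaves tacit. There are two minor slips. Your first display should have $\mathrm{d}s/s$, as your Mellin formula and your weights $w_{x,\delta}$ do. For $\sigma<-1$ the majorant needs $\sup_{x\in B}(1+sx)^{-\sigma-1}$ rather than $(1+s\inf B)^{-\sigma-1}$; this is harmless, because $1+sx$ stays between positive constants on $[0,1]\times B$.

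The substantive issue is the step you flagged as the main obstacle. The Mellin formula $\int_0^1 s^{u-1}\rho_{\mathbf a,\mathbf b}(s)\,\mathrm{d}s=\Gamma(\mathbf a+u)/\Gamma(\mathbf b+u)$, with $\rho\ge0$, is not valid under $\min_i a_i>0$ alone. It forces $\psi:=\sum_i(b_i-a_i)>0$. For $\psi\le0$ the right-hand side behaves like $u^{-\psi}$ and does not tend to $0$ as $u\to\infty$. The left-hand side, by contrast, is either infinite at $u=0$ or, by dominated convergence, tends to $0$.

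Nonnegativity of $\rho$ on $(0,1)$ does not imply $\psi>0$, and without it the corollary fails. For $q=1$ one has $\rho_{a,b}(s)=s^a(1-s)^{b-a-1}/\Gamma(b-a)$ on $(0,1)$. Taking $a=2$, $b=1/2$ gives $\rho>0$ there, since $\Gamma(-3/2)>0$. Yet the quotient $Q$ satisfies $\partial_xQ(0)=\sigma\bigl(\frac{a}{b}-\frac{a+\delta}{b+\delta}\bigr)=3\sigma\delta/(\frac{1}{2}+\delta)$, so $-\sgn(\sigma)\,\partial_xQ(0)<0$ for every $\sigma\neq0$.

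The paper's proof has the same tacit requirement, since the representation it invokes can only hold when $\psi>0$. Once $\sum_i(b_i-a_i)>0$ is added to the hypotheses, your argument is complete. That condition also gives the integrability of $\rho$ near $s=1$ that you were worried about.
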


\begin{proof}
By the Karp--Prilepkina Stieltjes representation~\cite{Karp2015Stieltjes},
\[
        \HypF{q+1}{q}(\sigma,\mathbf a;\mathbf b;-x)
        =
        \frac{\Gamma(\mathbf b)}{\Gamma(\mathbf a)}
        \int_0^1 (1+xs)^{-\sigma}\rho_{\mathbf a,\mathbf b}(s)\,\frac{\dx[s]}{s}.
\]
The weight with respect to the reference measure \(\dx[s]/s\) is \(\rho_{\mathbf a,\mathbf b}(s)\), with the factor \(\Gamma(\mathbf b)/\Gamma(\mathbf a)\) absorbed into the normalising constant. Using the Meijer-\(G\) shift relation \(\rho_{\mathbf a+\delta,\mathbf b+\delta}(s) = s^\delta\rho_{\mathbf a,\mathbf b}(s)\), the (shifted verus unshifted) weight ratio is
\[
        \ell_{\delta,0}(s)=C_\delta s^\delta,
        \qquad C_\delta>0,
\]
where \(C_\delta\) collects the gamma prefactors. Hence \(\ell_{\delta,0}\) is increasing on \((0,1)\) for \(\delta>0\), so \(\varepsilon_\ell=1\).  The \(x\)-kernel is $K_x(s)=-{\sigma s} / (1+xs),$ with monotonicity direction \(-\sgn\sigma\) on \(0<s<1\) for \(x>-1\).  The Sign Rule delivers the claim, with regularity following from the same local dominated convergence argument as above.
\end{proof}

\begin{rem}
\label{rem:stieltjes-positive-measure}
The proof does not depend on the particular Euler density.  What is needed is a positive family of measures \((\beta_\delta)_{\delta\ge0}\) on \([0,1]\) such that
\[
        \HypF{q+1}{q}(\sigma,\mathbf a+\delta;\mathbf b+\delta;-x)
        =
        \int_0^1(1+xs)^{-\sigma}\,\beta_\delta(\dx[s])
\]
and such that the shifted measure is absolutely continuous with respect to \(\beta_0\), with
\[
        \frac{\dx[\beta_\delta]}{\dx[\beta_0]}(s)=C_\delta s^\delta,
        \qquad C_\delta>0 .
\]
The Euler density and the Meijer-\(G\) density give rise to two positive representing families of this form.  If a chosen Stieltjes density changes sign, that representation does not induce a probability law, and the method applies only if another positive representing family is available.
\end{rem}

The induced law also furnishes two explicit bounds for the function, computed from its single moment \(\mu=\prod_i a_i/b_i\): one extends a bound of Karp and Sitnik, and the other appears to be new.

\begin{assertion}
\label{prop:stieltjes-bounds}
Let \(\mu=\prod_{i=1}^q a_i/b_i\in(0,1)\). If either the Euler density of Section \ref{ex:karp-sitnik} \((b_k>a_k>0)\) or the Meijer-\(G\) density of Corollary~\ref{cor:stieltjes-meijer-relaxation} is nonnegative, then for every \(\sigma>0\) and nonzero \(x>-1\), it holds that
\[
        \bigl(1+x\mu\bigr)^{-\sigma}
        <
        \HypF{q+1}{q}(\sigma,(a_q);(b_q);-x)
        <
        (1-\mu)+\mu\,(1+x)^{-\sigma} .
\]
\end{assertion}
\begin{proof}
The idea is based on bounding a convex function of a random variable using the Jensen and Edmundson--Madansky~\cite{Madansky1959} inequalities. Let \(S\) be a random variable with the probability law induced by the positive Stieltjes density on \((0,1)\), so that $\E S=\mu$ and $$\HypF{q+1}{q}(\sigma,(a_q);(b_q);-x)=\E(1+xS)^{-\sigma}.$$
Observe that \(\phi(s)=(1+xs)^{-\sigma}\) is strictly convex on \( I \coloneqq [0,1]\) since $\phi''(s)=\sigma(\sigma+1)x^2(1+xs)^{-\sigma-2}>0$ for $s \in I $, $\sigma>0$ and  $x\neq0$.  Consequently, Jensen's inequality yields \(\E\,\phi(S)>\phi(\mu)=(1+x\mu)^{-\sigma}\). On the other side, $S$ has bounded support so  the Edmundson--Madansky inequality furnishes
\[
        \E\,\phi(S)<(1-\E S)\,\phi(0)+\E S\,\phi(1)=(1-\mu)+\mu(1+x)^{-\sigma}.
\]
But $S$ is nondegenerate. Therefore, both bounds are strict.
\end{proof}

The lower bound is that of~\cite[Theorem~3]{KarpSitnik2009ratios}, obtained there only for \(\sigma\ge1\). The convexity argument removes that restriction, giving it for every \(\sigma>0\) and on the wider Meijer-\(G\) range of Corollary~\ref{cor:stieltjes-meijer-relaxation}, and so settles Conjecture~1 of~\cite{KarpSitnik2009ratios} on that range.  The upper bound appears to be new.  Neither addresses the broader conjectural condition \(\sum_i(b_i-a_i)>0\).

\subsection{Prabhakar quotients and remainders}
\label{sec:prabhakar-section}

Let \(\alpha,\beta,\gamma,q>0\) and \(z>0\) in the domain of convergence.  For the Prabhakar function
\[
        E_{\alpha,\beta}^{\gamma,q}(z)=\sum_{k\ge0}a_k z^k,
        \qquad
        a_k=\frac{(\gamma)_{qk}}{k!\,\Gamma(\alpha k+\beta)},
        \qquad
        (\gamma)_{qk}=\frac{\Gamma(\gamma+qk)}{\Gamma(\gamma)},
\]
the $z$-kernel \(K_z(k)=k/z\) is increasing  and the other two parameter kernels $L_\gamma(k)=\psi(\gamma+qk)-\psi(\gamma),$ and $L_\beta(k)=-\psi(\alpha k+\beta)$ are, respectively, increasing and decreasing.  The terms \(a_k z^k\) are smooth in \(z,\gamma,\beta\), and on every compact parameter set the relevant first and mixed derivatives are dominated by a summable majorant inside the disc of convergence, so Assumptions~\ref{asm:regularity} and~\ref{asm:mixed-regularity} hold.  The two-kernel Sign Rule yields
\[
        \partial_z\partial_\gamma\log E_{\alpha,\beta}^{\gamma,q}(z)\ge0,
        \qquad
        \partial_z\partial_\beta\log E_{\alpha,\beta}^{\gamma,q}(z)\le0 .
\]
On the other hand, for the one-kernel scenario, the weight ratios are ${(\gamma_1)}_{qk}/{(\gamma_2)}_{qk}$ and $\Gamma(\alpha k+\beta_2)/\Gamma(\alpha k+\beta_1)$, and both are monotone in \(k\). Therefore, the Sign Rule gives
\[
        \sgn(\gamma_1-\gamma_2)\,\partial_z
        \frac{E_{\alpha,\beta}^{\gamma_1,q}(z)}
             {E_{\alpha,\beta}^{\gamma_2,q}(z)}
        \ge0,
        \qquad
        \sgn(\beta_2-\beta_1)\,\partial_z
        \frac{E_{\alpha,\beta_1}^{\gamma,q}(z)}
             {E_{\alpha,\beta_2}^{\gamma,q}(z)}
        \ge0.
\]
We shall also use the following Fox--Wright coefficient fact.  It is stated separately because the Prabhakar coefficient is the one-numerator, one-denominator case of the Fox--Wright coefficient, up to a constant factor.

\begin{lemma}
\label{lem:foxwright-coeff-logconcave}
Let \(0<A\le1\), \(a\ge A\), and \(b,B>0\).  The sequence
\[
        u_k=\frac{\Gamma(a+Ak)}{\Gamma(k+1)\Gamma(b+Bk)},
        \qquad k\ge0,
\]
is log-concave.
\end{lemma}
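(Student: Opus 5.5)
Log-concavity of the positive sequence \((u_k)\) means \(u_k^2\ge u_{k-1}u_{k+1}\) for \(k\ge1\), i.e. the forward ratio \({\cal K}(k)=u_{k+1}/u_k\) is nonincreasing. I will split \(\log u_k\) into its three gamma factors and check each separately. Since a sum of concave sequences is concave, it suffices to show that each of
\[
k\mapsto \log\Gamma(a+Ak),\qquad k\mapsto -\log\Gamma(k+1),\qquad k\mapsto -\log\Gamma(b+Bk)
\]
has nonpositive second difference, with the first being compensated where necessary. The last two are immediate. The function \(\log\Gamma\) is convex (Example~\ref{ex:gamma-logconvex}), and composing with the affine maps \(k\mapsto k+1\) and \(k\mapsto b+Bk\) preserves convexity, so their negatives are concave. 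The difficulty is the first factor, which is log-convex. It must therefore be paired with \(-\log\Gamma(k+1)\), and this pairing is where the hypotheses \(A\le1\) and \(a\ge A\) enter.

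So the core step is to show that \(v_k=\Gamma(a+Ak)/\Gamma(1+k)\) is log-concave. I will pass to the continuous variable \(t\ge0\) and prove that
\[
f(t)=\log\Gamma(a+At)-\log\Gamma(1+t)
\]
is concave on \([0,\infty)\). Its second derivative is
\[
f''(t)=A^2\psi'(a+At)-\psi'(1+t).
\]
Using \(\psi'(x)=\sum_{n\ge0}(x+n)^{-2}\), or the integral representation \(\psi'(x)=\int_0^\infty \frac{s e^{-xs}}{1-e^{-s}}\,\dx[s]\), I will compare term by term. The key inequality is \(a+At\ge A(1+t)\), which is exactly the hypothesis \(a\ge A\). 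Since \(\psi'\) is decreasing, this gives
\[
A^2\psi'(a+At)\le A^2\psi'(A(1+t)).
\]
It then remains to prove \(A^2\psi'(Ay)\le\psi'(y)\) for \(y\ge1\) and \(0<A\le1\). Equivalently, the map \(c\mapsto c^2\psi'(cy)\) is nondecreasing in \(c\).

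I expect this scaling inequality to be the main obstacle. My first attempt writes
\[
c^2\psi'(cy)=\sum_{n\ge0}\frac{c^2}{(cy+n)^2}=\sum_{n\ge0}\frac{1}{(y+n/c)^2}.
\]
Each summand is nondecreasing in \(c\), because \(n/c\) decreases as \(c\) increases. Hence \(c^2\psi'(cy)\) is nondecreasing in \(c\), and \(A\le1\) yields \(A^2\psi'(Ay)\le\psi'(y)\). This argument is clean and in fact does not need \(y\ge1\). Combining the steps gives \(f''\le0\), so \(f\) is concave. Restricting a concave function to the integers gives a concave sequence, so \(v_k\) is log-concave.

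Finally I will assemble the pieces. The sequence \(\log u_k=\log v_k-\log\Gamma(b+Bk)\) is a sum of two concave sequences, hence concave, and therefore \(u_k\) is log-concave. In the language of Lemma~\ref{lem:discrete-tail-sign-rule}, this says that the discrete kernel \({\cal K}\) is nonincreasing. That is the form in which the lemma will be used for the Prabhakar remainders.
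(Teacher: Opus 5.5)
Your proof is correct and follows essentially the same route as the paper. Both arguments reduce to the termwise series comparison $A^2\psi'(a+At)=\sum_{m\ge0}(a/A+t+m/A)^{-2}\le\sum_{m\ge0}(1+t+m)^{-2}=\psi'(1+t)$, which you split into the two monotonicity steps $a\ge A$ and $c\mapsto c^2\psi'(cy)$ nondecreasing. The paper encodes concavity through the double-integral representation of the second difference of $\log\Gamma(c+sk)$ rather than through concavity of the continuous interpolant restricted to the integers, but this is the same argument.
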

\begin{proof}
For \(k\ge1\), set $\delta_{c,s}(k) = \log\Gamma(c+s(k+1))+\log\Gamma(c+s(k-1))-2\log\Gamma(c+sk).$ It follows that
\[
        \log u_{k+1}+\log u_{k-1}-2\log u_k
        =
        \delta_{a,A}(k)-\delta_{1,1}(k)-\delta_{b,B}(k).
\]
Moreover,
\[
\delta_{c,s}(k)
=
\int_0^1\!\int_0^1
s^2\psi'\!\bigl(c+s(k-1+u+v)\bigr)\,\dx[u]\,\dx[v] .
\]
Now, the digamma function derivative is $\psi'(x)=\sum_{m\ge0}{(x+m)^{-2}},$ so for \(t\ge0\) we have
\[
        A^2\psi'(a+At)
        =
        \sum_{m\ge0}
        \frac{1}{\bigl(a/A+t+m/A\bigr)^2}
        \le
        \sum_{m\ge0}\frac{1}{(1+t+m)^2}
        =
        \psi'(1+t),
\]
because \(0<A\le1\) and \(a/A\ge1\).  Hence, \(\delta_{a,A}(k)\le\delta_{1,1}(k)\), while \(\delta_{b,B}(k)\ge0\).  Consequently,  \(\log u_{k+1}+\log u_{k-1}-2\log u_k\le0\).
\end{proof}
Next, we turn to a question posed by  \cite[Section 7]{MehrezSitnik2018turan} regarding the remainders
\[
        R_n(z)=E_{\alpha,\beta}^{\gamma,q,n}(z)
        \coloneqq
        \sum_{k\ge n+1}a_k z^k,
\]
and whether or not they satisfy the Tur\'an-type inequality $R_n(z)R_{n+2}(z)\le R_{n+1}(z)^2$ for all admissible parameters?

The question is settled once more by the Sign Rule from the monotone likelihood ratio of laws induced by truncated series, where we consider each \(R_n\) as a  mass. The following result secures a positive range and, for \(q=1\), shows it is sharp as a uniform statement.

\begin{thm}
\label{thm:prabhakar-remainders}
Let \(\alpha,\beta>0\), \(z>0\), \(0<q\le1\), and \(\gamma\ge q\).  For every \(n\ge0\), it holds that
\[
        E_{\alpha,\beta}^{\gamma,q,n}(z)
        E_{\alpha,\beta}^{\gamma,q,n+2}(z)
        \le
        \left(E_{\alpha,\beta}^{\gamma,q,n+1}(z)\right)^2.
\]
For \(q=1\), the condition \(\gamma\ge1\) cannot be relaxed uniformly in \(\alpha,\beta\).
\end{thm}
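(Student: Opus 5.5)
Our plan is to read the remainders as upper tails and apply the Discrete Sign Rule for tail sums, Lemma~\ref{lem:discrete-tail-sign-rule}.

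\textbf{Setting up the tails.} Take $J=\mathbb N_0$ with weight $w(k)=a_kz^k$. Then $E_{\alpha,\beta}^{\gamma,q,n}(z)=\Phi^+(\cdot,n+1)$, so the claimed inequality is
\[
\Phi^+(n+1)\Phi^+(n+3)\le\Phi^+(n+2)^2 .
\]
This is exactly the conclusion of Lemma~\ref{lem:discrete-tail-sign-rule} at index $m=n+1$, provided the discrete kernel ${\cal K}(k)=a_{k+1}z/a_k$ is nonincreasing. Equivalently, we need $(a_k)$ to be log-concave; the factor $z^k$ contributes nothing to the second difference of $\log w$.

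\textbf{Log-concavity of the coefficients.} Write
\[
a_k=\frac{\Gamma(\gamma+qk)}{\Gamma(\gamma)\,k!\,\Gamma(\alpha k+\beta)} .
\]
Up to the positive constant $1/\Gamma(\gamma)$, this is the Fox--Wright coefficient $u_k$ of Lemma~\ref{lem:foxwright-coeff-logconcave} with $a=\gamma$, $A=q$, $b=\beta$, $B=\alpha$. The hypotheses $0<q\le1$ and $\gamma\ge q$ are precisely $0<A\le1$ and $a\ge A$. Hence $(a_k)$ is log-concave, ${\cal K}$ is nonincreasing, and $\varepsilon_{\cal K}=-1$.

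The kernel ${\cal K}$ is monotone on all of $\mathbb N_0$, so it is monotone on every tail, and the stochastic-order comparison in the proof of Lemma~\ref{lem:discrete-tail-sign-rule} applies. Finiteness of the expectations is automatic: $\E\,{\cal K}(X)$ is a ratio of convergent tails, by the identity~\eqref{def.expectation.identity}. This proves the positive range.

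\textbf{Sharpness for $q=1$.} Fix $\gamma<1$. We must exhibit admissible $\alpha,\beta,z,n$ for which the inequality fails. For $q=1$ the coefficients are $a_k=(\gamma)_k/(k!\,\Gamma(\alpha k+\beta))$. The plan is to make $\Gamma(\alpha k+\beta)$ nearly constant by letting $\alpha\to0^+$ with $\beta=1$. The remainders then converge termwise to the tails of $\sum_k(\gamma)_kz^k/k!=(1-z)^{-\gamma}$, for $0<z<1$.

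Dominated convergence should be straightforward, since $1/\Gamma(\alpha k+1)$ is bounded uniformly for small $\alpha$ on the relevant range. It therefore suffices to find $z$ and $n$ with
\[
T_{n+1}T_{n+3}>T_{n+2}^2,\qquad T_m=\sum_{k\ge m}\frac{(\gamma)_k}{k!}z^k .
\]
The limiting coefficients $c_k=(\gamma)_k/k!$ satisfy $c_{k+1}/c_k=(\gamma+k)/(k+1)$, which is strictly increasing in $k$ when $\gamma<1$. So $(c_k)$ is strictly log-convex. Tails of a strictly log-convex sequence should be strictly log-convex: by Remark~\ref{rem.covariance.upper}, the determinant is a covariance of a strictly increasing kernel with a nondegenerate indicator, so strict Chebyshev gives the reversed strict inequality. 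Strict inequality survives small perturbation, so for $\alpha$ small the Tur\'an inequality fails.

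\textbf{Main obstacle.} The main difficulty is justifying the limit $\alpha\to0$ uniformly over the tail, and confirming the strictness of the reversed inequality for the limit sequence. An alternative that avoids the limiting argument is to take $z\to0$ with fixed $n$. The leading terms then give
\[
R_nR_{n+2}-R_{n+1}^2\approx z^{2n+4}\bigl(a_{n+1}a_{n+3}-a_{n+2}^2\bigr).
\]
Failure then reduces to a strict log-convexity triple of $(a_k)$, which holds for small $\alpha$ when $\gamma<1$.
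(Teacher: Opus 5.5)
Your proof is correct. The positive range follows the paper exactly: $R_n=T_{n+1}$, then Lemma~\ref{lem:discrete-tail-sign-rule} at $m=n+1$, then Lemma~\ref{lem:foxwright-coeff-logconcave} with $(a,A,b,B)=(\gamma,q,\beta,\alpha)$.

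The sharpness part takes a different route. The paper fixes $n=0$ and lets $z\to0$, so that the sign of $R_0R_2-R_1^2$ is that of $a_1a_3-a_2^2$. It then checks that $a_1a_3/a_2^2\to 2(\gamma+2)/(3(\gamma+1))>1$ as $\alpha\downarrow0$; this is your fallback alternative specialised to $n=0$.

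Your main route reverses the order of limits. You fix $z\in(0,1)$, send $\alpha\to0$ with $\beta=1$, and identify the limiting tails as those of the binomial series. Its forward ratio $1-(1-\gamma)/(k+1)$ is strictly increasing, so the covariance form in Remark~\ref{rem.covariance.upper} gives a strictly reversed Tur\'an inequality at every index. That strict inequality then persists for small $\alpha$.

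The two approaches trade off differently. The paper only manipulates finitely many coefficients and needs no interchange of limit and sum. Yours costs a dominated-convergence step but proves more: failure at every $n$ and every $z\in(0,1)$. It also exercises the log-convex direction of the paper's own tail rule.

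The two points you flag as obstacles are routine:
\begin{itemize}
\item Domination: since $\Gamma\ge\min_{(0,\infty)}\Gamma>0.88$, one has $a_kz^k\le C\,(\gamma)_kz^k/k!$ with $C$ independent of $\alpha$. This majorant is summable for $z<1$, so each tail is continuous at $\alpha=0$.
\item Strictness: with $X\sim P^+_m$,
\[
\Cov\bigl(\mathcal K(X),\mathbf 1\{X>m\}\bigr)=P(X=m)\,P(X>m)\,\bigl[\E(\mathcal K(X)\mid X>m)-\mathcal K(m)\bigr],
\]
which is strictly positive for strictly increasing $\mathcal K$.
\end{itemize}
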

\begin{proof}
Set \(b_k=a_kz^k\)  and introduce
\[
        T_m=\sum_{k\ge m}b_k=\Phi^+(m),
\]
the upper truncation of~\eqref{def.truncated_laws}. This means that \(R_n=T_{n+1}\).  By Lemma~\ref{lem:discrete-tail-sign-rule}, it is enough to show that \((b_k)_k\) is log-concave.  By Lemma~\ref{lem:foxwright-coeff-logconcave}, with \((a,A,b,B)=(\gamma,q,\beta,\alpha)\), the coefficient sequence \((a_k)\) is log-concave.  Since \((z^k)_k\) satisfies the log-concavity inequality with equality, \((b_k)_k\) is log-concave.  Hence,
\[
        T_{m+1}^2\ge T_mT_{m+2}.
\]
Taking \(m=n+1\) yields the stated remainder Tur\'an inequality.

For sharpness in the case \(q=1\), suppose \(0<\gamma<1\).  Take \(n=0\), where the Tur\'an inequality reads \(R_0R_2\le R_1^2\).  From \(R_0=a_1z+O(z^2)\), \(R_1=a_2z^2+O(z^3)\), and \(R_2=a_3z^3+O(z^4)\),
\[
        R_0R_2-R_1^2
        =
        a_1a_3z^4-a_2^2z^4+O(z^5)
        =
        \bigl(a_1a_3-a_2^2\bigr)z^4+O(z^5),
\]
so the sign of \(R_0R_2-R_1^2\) for small \(z>0\) is that of \(a_1a_3-a_2^2\). As \(\alpha\downarrow0\),
\[
        \frac{a_1a_3}{a_2^2}
        \longrightarrow
        \frac{2(\gamma+2)}{3(\gamma+1)}>1 .
\]
Therefore, for sufficiently small \(\alpha>0\), \(a_1a_3>a_2^2\), and the Tur\'an inequality fails for all sufficiently small positive \(z\).  Hence \(\gamma\ge1\) is necessary for a uniform \(q=1\) result.
\end{proof}

\subsection{A local threshold in the zero-balanced Gauss quotient}
\label{sec:baricz}

The covariance plus expectation identity recovers the threshold at which a monotonicity conjectured by Baricz~\cite[Conjecture~A]{Baricz2007MathZ} for the zero-balanced Gauss function fails, and names the effect that produces it. For \(a,b>0\) and \(x\in(0,1)\), put
\[
        R(x)=\frac{\HypF21(a+1,b+1;a+b+1;x)}{\HypF21(a,b;a+b;x)} .
\]
Baricz conjectured \(\partial_b R<0\) for all \(a,b>0\), \(x\in(0,1)\). Recently,  Qiu, Ma and Xiang~\cite{QiuMaXiang2024} disproved this by showing the conjecture failing once \(b>\sqrt{a(a+1)}\).  Our framework achieves the same result with a short proof that singles out the competing term
\begin{thm}[\cite{QiuMaXiang2024}]
\label{thm:baricz-threshold}
For every \(a>0\), the inequality \(\partial_b R(x)<0\) fails for all sufficiently small \(x>0\) whenever \(b>\sqrt{a(a+1)}\).
\end{thm}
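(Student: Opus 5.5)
The plan is to work with $\partial_b\log R$ instead of $\partial_b R$. Since $R>0$, Lemma~\ref{lem.sign.logderiv} gives $\sgn\partial_b R=\sgn\partial_b\log R$. Write $F_1(x)=\HypF21(a+1,b+1;a+b+1;x)$ and $F_2(x)=\HypF21(a,b;a+b;x)$, so that $\partial_b\log R=\partial_b\log F_1-\partial_b\log F_2$. In each function $b$ enters twice: once in an upper position ($b$, respectively $b+1$) and once in a lower position ($a+b$, respectively $a+b+1$). Hence the $b$-kernel is a difference of two digamma increments:
\[
L^{(2)}(n)=\{\psi(b+n)-\psi(b)\}-\{\psi(a+b+n)-\psi(a+b)\},
\qquad
L^{(1)}(n)=\{\psi(b+1+n)-\psi(b+1)\}-\{\psi(a+b+1+n)-\psi(a+b+1)\}.
\]
By Proposition~\ref{lem:score-identity-final}, $\partial_b\log F_i=\E\,L^{(i)}(X_i)$ with $X_i\sim P_i$, the law induced by the terms of $F_i$.

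This is the competing effect behind the failure. The kernels differ ($L^{(1)}\neq L^{(2)}$) and the laws differ ($P_1\neq P_2$), so Theorem~\ref{thm:sign-rule} does not apply and the sign is not forced. I would make the competition explicit near $x=0$. Both kernels vanish at $n=0$, and under each law $P_i(X_i\ge2)=O(x^2)$. So only the atom at $n=1$ contributes to first order:
\[
\E\,L^{(i)}(X_i)=L^{(i)}(1)\,P_i(X_i=1)+O(x^2).
\]
Using $\psi(c+1)-\psi(c)=1/c$ gives $L^{(2)}(1)=a/(b(a+b))$ and $L^{(1)}(1)=a/((b+1)(a+b+1))$. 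The first-order atom masses are $P_2(X_2=1)=ab\,x/(a+b)+O(x^2)$ and $P_1(X_1=1)=(a+1)(b+1)\,x/(a+b+1)+O(x^2)$. Therefore
\[
\partial_b\log R=x\Bigl[\frac{a(a+1)}{(a+b+1)^2}-\frac{a^2}{(a+b)^2}\Bigr]+O(x^2).
\]

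Next I would locate the sign change of the bracket. Put $s=a+b$. The bracket is positive if and only if $(a+1)s^2>a(s+1)^2$, that is, $s^2-2as-a>0$. Since $s>0$, this holds exactly when $s>a+\sqrt{a^2+a}$, which is $b>\sqrt{a(a+1)}$. In that range the leading coefficient is strictly positive, so $\partial_b\log R>0$, and hence $\partial_b R>0$, for all sufficiently small $x>0$. This is the claimed failure. The threshold is exactly where the larger kernel value under the shifted law, weighted by its larger atom, overtakes the unshifted contribution.

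The main obstacle is justifying the $O(x^2)$ remainder rigorously, uniformly enough to conclude for all small $x$. I would handle it by analyticity rather than estimates. For fixed $a,b>0$, each $F_i$ is a power series in $x$ with radius $1$ and $F_i(0)=1$. The $b$-derivative can be taken termwise under the regularity already verified in Section~\ref{ex.gen-hypergeometric}, so $\partial_b F_i$ is also analytic on $|x|<1$. Hence $\partial_b\log F_i=\partial_bF_i/F_i$ is analytic near $x=0$. Its Taylor coefficients of order $0$ and $1$ are read off from the first two terms of each series: the constant term vanishes and the linear term is as computed above. A nonzero linear coefficient then fixes the sign on some interval $(0,x_0)$.
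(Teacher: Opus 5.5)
Your proof is correct, and its decisive step coincides with the paper's. Both arguments conclude from the first-order coefficient of an expansion in $x$. Your bracket $\frac{a(a+1)}{(a+b+1)^2}-\frac{a^2}{(a+b)^2}$ equals the paper's coefficient $\frac{a(b^2-a(a+1))}{(a+b)^2(a+b+1)^2}$, up to the factor $R=1+O(x)$ relating $\partial_b R$ and $\partial_b\log R$.

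The two routes differ in the decomposition used to exhibit the competition.

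The paper keeps a single law $N\sim P_x$ and writes $R=\E\,\ell(N)$ with the $b$-dependent weight ratio $\ell$. It then applies the covariance-plus-expectation identity (Proposition~\ref{lem:induced-expectation-derivative}) to get $\partial_b R=\E\,\partial_b\ell(N)+\Cov(\ell(N),K_b(N))$. The two terms have definite opposite signs, which names the mechanism: the first term alone would give Baricz's inequality, and the Chebyshev-positive covariance is the obstruction. However, the paper's final ``leading order'' step is only asserted.

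You work with $\partial_b\log R=\E\,L^{(1)}(X_1)-\E\,L^{(2)}(X_2)$ under two laws. This needs only the basic score identity, gives the coefficient in one line, and your analyticity argument makes the $O(x^2)$ remainder rigorous.

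As written, both of your terms are nonnegative, since each kernel is a sum of terms $(c+j)^{-1}-(a+c+j)^{-1}>0$ with $c\in\{b,b+1\}$. So the sign competition only becomes visible after expanding. Inserting $\E\,L^{(2)}(X_1)$ splits your difference into two signed pieces:
\begin{itemize}
\item $\E\,[L^{(1)}-L^{(2)}](X_1)\le0$, because $c\mapsto a/(c(a+c))$ decreases;
\item $\E\,L^{(2)}(X_1)-\E\,L^{(2)}(X_2)\ge0$, by the one-kernel Sign Rule, since $\ell$ and $L^{(2)}$ both increase in $n$.
\end{itemize}
This recovers a signed split analogous to the paper's.

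One small correction to your closing heuristic. The kernel value under the shifted law is the \emph{smaller} one: $L^{(1)}(1)=a/((b+1)(a+b+1))<a/(b(a+b))=L^{(2)}(1)$. It is the larger atom $P_1(X_1=1)$ that overcompensates.
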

\begin{proof}
Write \(\HypF21(a,b;a+b;x)=\sum_n w_n(b)x^n\), \(w_n=(a)_n(b)_n/((a+b)_n n!)>0\), and let \(N\sim P_x\) have the induced law \(P_x(n)\propto w_n x^n\) on \(\Nzero\).  The contiguous quotient is the mean \(R(x)=\E\,\ell(N)\) of the weight ratio \(\ell(n)=(1+n/a)(1+n/b)/(1+n/(a+b))\), increasing in \(n\).  Here \(b\) is the monotonicity parameter, with kernel \(K_b(n)=\partial_b\log w_n=\sum_{j=0}^{n-1}\bigl[(b+j)^{-1}-(a+b+j)^{-1}\bigr]\), increasing in \(n\).  The terms \(w_n(b)x^n\) are smooth in \(b\) and dominated on compact \(b\)-intervals by a summable majorant inside the disc of convergence, so Assumption~\ref{asm:regularity} holds.  The weight ratio \(\ell\) depends on \(b\) as well, so the covariance plus expectation identity (Proposition~\ref{lem:induced-expectation-derivative}) applies and gives
\[
        \partial_b R=\E\,\partial_b \ell(N)+\Cov\bigl(\ell(N),K_b(N)\bigr) .
\]
The first term \(\E\,\partial_b \ell(N)\) is \(\le0\), since \(\partial_b \ell(n)=-n(a+n)(a+2b+n)/[b^2(a+b+n)^2]\le0\). This fact alone would yield Baricz's inequality.  The covariance is positive by the Chebyshev integral inequality, \(\ell\) and \(K_b\) both increasing.  The conjecture therefore holds only while \(\E\,\partial_b \ell(N)\) dominates the covariance.  At leading order in \(x\), the two-point law \(P_x(0)=1+O(x)\), \(P_x(1)=\tfrac{ab}{a+b}\,x+O(x^2)\) gives
\[
        \partial_b R=\frac{a\,\bigl(b^2-a(a+1)\bigr)}{(a+b)^2(a+b+1)^2}\,x+O(x^2),
\]
which is negative for \(b<\sqrt{a(a+1)}\) and positive for \(b>\sqrt{a(a+1)}\). Hence the conjectured sign fails for \(b>\sqrt{a(a+1)}\).
\end{proof}

The covariance plus expectation identity explains the local failure mechanism: the negative term \(\E\,\partial_b\ell(N)\) competes with the positive covariance \(\Cov(\ell(N),K_b(N))\).  

\subsection{A shifted hypergeometric transform criterion}
\label{sec:kk-conj}

The shifted hypergeometric Tur\'anian studied by Kalmykov and Karp
\cite{KalmykovKarp2017} is most naturally handled by applying the variance rule
to a positive transform representation of the function itself.  The following single hypothesis furnishes a criterion for every rank \(p\ge q\): that the gamma ratio is a positive Laplace transform in \(\mu\).  For \(\mathbf a\in(0,\infty)^p\), \(\mathbf b\in(0,\infty)^q\) with \(p\ge q\), put
\[
        f(\mu;x)=\sum_{n\ge0}
        \frac{\Gamma(\mathbf a+\mu+n)}{\Gamma(\mathbf b+\mu+n)}
        \frac{x^n}{n!},
        \qquad \Gamma(\mathbf a+z)=\prod_i\Gamma(a_i+z).
\]

\begin{assertion}
\label{prop:kk-transform-criterion}
Suppose the gamma ratio admits a positive Laplace transform in \(\mu\),
\begin{equation}
\label{eq:kk-gamma-laplace}
        \frac{\Gamma(\mathbf a+\mu)}{\Gamma(\mathbf b+\mu)}
        =
        \int_{\mathbb R} e^{-\mu t}g_0(t)\,\dx[t],
        \qquad g_0(t)\ge0,\quad \mu>0 .
\end{equation}
Setting \(g_n(t)=e^{-nt}g_0(t)\) then represents the shifted ratio \(\Gamma(\mathbf a+\mu+n)/\Gamma(\mathbf b+\mu+n)=\int_{\mathbb R}e^{-\mu t}g_n(t)\,\dx[t]\) for every \(n\in\Nzero\). Then, for every \(x\ge0\) in the convergence domain, \(\mu\mapsto f(\mu;x)\) is log-convex on \((0,\infty)\), so
\[
        \Delta_f(\mu;\alpha,\beta;x)
        \coloneqq
        f(\mu+\alpha;x)f(\mu+\beta;x)
        -f(\mu;x)f(\mu+\alpha+\beta;x)
        \le0
\]
for all \(\mu>0\) and \(\alpha,\beta\ge0\).
\end{assertion}

\begin{proof}
Since \(x\ge0\), Tonelli's theorem permits exchanging the defining series for
\(f(\mu;x)\) with the positive integral representations of the shifted gamma
ratios, giving
\[
        f(\mu;x)=\int_{\mathbb R} e^{-\mu t}H_x(t)\,\dx[t],
        \qquad
        H_x(t)=\sum_{n\ge0}g_n(t)\frac{x^n}{n!}\ge0 .
\]
Thus \(f\) is a positive Laplace transform in \(\mu\), and Lemma~\ref{lem:positive-transform} gives log-convexity in \(\mu\) (and the variance identity \(\partial_\mu^2\log f=\Var(X)\ge0\) wherever the second moment is finite).  This yields the required Tur\'an inequality.
\end{proof}

Only the positive transform representation~\eqref{eq:kk-gamma-laplace} depends on the rank, and it is secured by the Karp--Prilepkina representation, the same positive representation mechanism as the Stieltjes quotient above.  In the balanced case \(p=q\), the ratio is the Mellin transform of the Meijer-\(G\) density \(\rho_{\mathbf a,\mathbf b}(s)=G^{q,0}_{q,q}\!\left(s\,\middle|\begin{smallmatrix}\mathbf b\\ \mathbf a\end{smallmatrix}\right)\) on \((0,1)\).  The common shift \(n\) replaces this density by \(s^n\rho_{\mathbf a,\mathbf b}(s)\).  After \(s=e^{-t}\), this gives a positive representing density \(g_n\) in~\eqref{eq:kk-gamma-laplace}, supported on \((0,\infty)\).  A standard sufficient condition for \(\rho_{\mathbf a,\mathbf b}\ge0\) is the Karp--Prilepkina condition \(\sum_{i}(t^{a_i}-t^{b_i})\ge0\) on \((0,1)\), for example the ordered partial-sum condition \(\sum_{i\le k}a_i\le\sum_{i\le k}b_i\) after arranging both vectors increasingly.

In the unbalanced case \(p>q\), a sufficient condition is the existence of a \(q\)-subvector \(\mathbf a'\subseteq\mathbf a\) satisfying the positivity condition relative to \(\mathbf b\), so that \(\Gamma(\mathbf a'+\mu)/\Gamma(\mathbf b+\mu)\) has a positive Laplace representation supported on \((0,\infty)\).  If the remaining numerator factors also have positive Laplace representations, then their product is represented by the convolution of positive measures, and~\eqref{eq:kk-gamma-laplace} follows. Outside such a positive representing range, this proof no longer supplies a positive transform, and the variance argument does not apply directly.

\section{Conclusion}
\label{sec:conclusion-final}

The paper develops a sign rule for positive series and integral transforms. The examples illustrate how the common probabilistic mechanism replaces several separate special-function arguments.  It yields parameter and argument monotonicity for generalised hypergeometric quotients.  It recovers and extends the Karp--Sitnik monotonicity of Stieltjes-transform quotients, removing the restriction to \(\sigma\ge1\) and adding a companion upper bound.  It settles a Prabhakar remainder Tur\'an inequality through the discrete tail rule, sharp at \(q=1\).  It locates the threshold of Baricz's zero-balanced Gauss problem and names the covariance as the competing effect.  And it furnishes a rank-uniform positive-transform criterion for shifted hypergeometric Tur\'anians of Kalmykov--Karp type. However, the limitation is also clear: the method requires a positive representation.  If the available series or integral is signed, as for alternating Bessel expansions or the usual hypergeometric forms of some orthogonal polynomials, the probability-law argument applies only after finding a different positive representation.

\printbibliography

\end{document}